\definecolor{codegreen}{rgb}{0,0.6,0}
\definecolor{codegray}{rgb}{0.5,0.5,0.5}
\definecolor{codepurple}{rgb}{0.58,0,0.82}
\definecolor{backcolour}{rgb}{0.94,0.96,0.96}
\lstdefinestyle{mystyle}{
  backgroundcolor=\color{backcolour}, commentstyle=\color{codegreen},
  keywordstyle=\color{magenta},
  numberstyle=\tiny\color{codegray},
  stringstyle=\color{codepurple},
  basicstyle=\ttfamily\normalsize,
  breakatwhitespace=false,         
  breaklines=true,                 
  captionpos=b,                    
  keepspaces=true,                 
  numbers=left,                    
  numbersep=5pt,                  
  showspaces=false,                
  showstringspaces=false,
  showtabs=false,                  
  tabsize=2
}
\theoremstyle{plain}							
\newtheorem{theorem}{Theorem}[section]
\newtheorem{proposition}[theorem]{Proposition}
\newtheorem{corollary}[theorem]{Corollary}		
\theoremstyle{definition}						
\newtheorem{example}[theorem]{Example}
\patchcmd\Gread@eps{\@inputcheck#1 }{\@inputcheck"#1"\relax}{}{}
\begin{document}

\begin{center}
\Large\bfseries gcd-Pairs in $\mathbb{Z}_{n}$ and their graph representations

\bigskip\normalsize\normalfont
Wanchai Tapanyo\footnote{Corresponding author: wanchai.t@nsru.ac.th}, Tanyaton Tongpikul, Suphansa Kaewpradit

\bigskip\small
Division of Mathematics and Statistics, Nakhon Sawan Rajabhat University,\\ Nakhon Sawan, 60000, Thailand    
\end{center}

\normalsize

\begin{abstract}
    This research introduces a gcd-pair in $\mathbb{Z}_n$ which is an unordered pair $\{[a]_n, [b]_n\}$ of elements in $ \mathbb{Z}_n $ such that $0\leq a,b < n$ and the greatest common divisor $\gcd(a,b)$ divides $ n $. The properties of gcd-pairs in $ \mathbb{Z}_n $ and their graph representations are investigated. We also provide the counting formula of gcd-pairs in $ \mathbb{Z}_n $ and its subsets. The algorithms to find, count and check gcd-pairs in $ \mathbb{Z}_{n}$ are included.\\
    \textbf{MSC:} 11A05, 11A41, 05C38, 05C45, 05C15\\
    \textbf{Keyword:} Greatest common divisor, gcd-pair, Hamiltonian graph, graph coloring
\end{abstract}

\section{Introduction}

For a long time, Number Theory has been developed in many directions to create new branches of mathematics and excited mathematicians by advantages. Although new problems are so interesting, Number Theory still extends its own story by fascinating some people who love integers to create novel research.

Let $a, b\in\mathbb{Z}$. We say that $a$ is congruent to $b$ modulo $n$, denoted by $a \equiv b \mod n$, if $n$ divides $a-b$. It is an equivalence relation inducing a partition $\mathbb{Z}_n$ of $\mathbb{Z}$. The partition $\mathbb{Z}_n$ is a quotient ring of $\mathbb{Z}$ by $n\mathbb{Z}$. So we can characterize its elements into three types, i.e., the zero element, unit elements, and zero divisors. Throughout this research, We denote by $U(\mathbb{Z}_n)$ and $Z(\mathbb{Z}_n)$, the sets of unit elements and zero divisors, respectively.

Let us turn back to one of the easily understandable problems. For any nonnegative integer $n$, how many pairs of nonnegative integers $a$ and $b$ such that $a+b = n$?. Some authors \cite{TangNPartition,TangNPartition2,XiaoHuiNPartition} applied the number of solutions of the sum to study partitions of sets of nonnegative integers. The problem also extended to $\mathbb{Z}_n$, see more in \cite{CHENZmpartition,CHENZmpartition2,SunZnpartition} for further information. There are some studies investigating pairs of elements in $\mathbb{Z}_n$ constructed by other conditions. In 1999 \cite{ShanMutualMul}, Shan and Wang defined an unordered pair of elements in $\mathbb{Z}_n$, called a mutual multiple pair (MM pair), and investigated its properties. They also provided the counting formula of the number of all MM pairs in $\mathbb{Z}_n$. About 20 years later, Chalapathi et al. \cite{ChalapathiClassical} used the study of Shan and Wang as motivation to study another type of unordered pairs in $\mathbb{Z}_n$, named classical pairs. This type of pairing is determined by the least common multiple of remainders representing elements in $\mathbb{Z}_n$ considering whether the least common multiple is divisible by $n$. In this research, we define the new type of unordered pairs of residue classes in $\mathbb{Z}_n$ by considering whether $n$ is divisible by the least common divisor of remainders representing the classes. Therefore, the pairs will be straightforwardly called \emph{gcd-pairs}. The properties and enumerating formulas of gcd-pairs are investigated. The algorithms for seeking, counting, and checking gcd-pairs are included.

\section{Some Properties of gcd-Pairs}

In this section, we introduce a gcd-pair in $ \mathbb{Z}_n $ and provide its properties. Let $ [a]_n, [b]_n \in \mathbb{Z}_n $ be represented by nonnegative integers $ a $ and $ b $ less than $ n $. An unordered pair $\{[a]_n, [b]_n\}$ is called a \emph{gcd-pair} in $\mathbb{Z}_n$ if and only if $\gcd(a,b)|n$ and the set of all gcd-pairs in $\mathbb{Z}_n$ is denoted by $\nu_n$, that is,
    \[ \nu_n = \big\{\{[a]_n, [b]_n\} : 0\leq a,b < n \text{ and } \gcd(a,b)\mid n \big\} \]
If $A\subseteq \mathbb{Z}_n$ we denote by $\nu_{n,A}$ the set of all gcd-pairs in $\mathbb{Z}_n$ restricted to $A$, i.e.,
    \[ \nu_{n,A} = \nu_n \cap \big\{\{[a]_n, [b]_n\} : [a]_n, [b]_n\in A \big\} \]
Since the gcd-pair is determined by the greatest common divisor of the nonnegative integers less than $n$ representing those two congruence classes, we simplify the notations of all elements in $\mathbb{Z}_n$ by writing and considering them as nonegative numbers less than $n$. Next, we provide the first example listing all gcd-pairs in $\mathbb{Z}_6$.

\begin{example}\label{ExListof6}
Consider $\mathbb{Z}_6 = \{0,1,2,3,4,5\}$. The greatest common divisors of all pairs of elements in $\mathbb{Z}_6$ are listed below.
\[	
\begin{array}{ccccccc}
    \gcd(0,1)=1 &&\gcd(0,2)=2 &&\gcd(0,3)=3 &&\gcd(0,4)=4\\
    \gcd(0,5)=5 &&\gcd(1,1)=1 &&\gcd(1,2)=1 &&\gcd(1,3)=1\\
    \gcd(1,4)=1 &&\gcd(1,5)=1 &&\gcd(2,2)=2 &&\gcd(2,3)=1\\
    \gcd(2,4)=2 &&\gcd(2,5)=1 &&\gcd(3,3)=3 &&\gcd(3,4)=1\\
    \gcd(3,5)=1 &&\gcd(4,4)=4 &&\gcd(4,5)=1 &&\gcd(5,5)=5\\
\end{array}
\]
Then $\nu_n$ consists of unordered pairs, namely, $\{0,1\},$ $\{0,2\},$ $\{0,3\},$ $\{1,1\},$ $\{1,2\},$ $\{1,3\},$ $\{1,4\},$ $\{1,5\},$ $\{2,2\},$ $\{2,3\},$ $\{2,4\},$ $\{2,5\},$ $\{3,3\},$ $\{3,4\},$ $\{3,5\},$ $\{4,5\}$.
\end{example}

Next, we provide some results characterizing gcd-pairs in $\mathbb{Z}_n$. Let us consider the condition $\gcd(a,b)\mid n$. It is true only for $a$ and $b$ that their all common divisors are factors of $n$. Therefore, one of the simple conditions which makes $\{a,b\}$ be a gcd-pair is that $a$ or $b$ are factors of $n$. The proof is done by the fact that $\gcd(a,b)$ divides $a$ and $b$. 

\begin{theorem}\label{ThmgcdpairPro}
For every $a,b\in \mathbb{Z}_n$, if $a\mid n$, then $\{a,b\}\in \nu_n$.
\end{theorem}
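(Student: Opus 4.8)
The plan is to show directly that if $a \mid n$, then $\gcd(a,b) \mid n$ for any $b$, which is exactly the defining condition for $\{a,b\}$ to belong to $\nu_n$. The key observation, as the authors hint in the paragraph preceding the theorem, is that $\gcd(a,b)$ is a common divisor of $a$ and $b$; in particular, $\gcd(a,b) \mid a$. This gives a divisibility chain that I want to exploit.

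First I would record the two divisibility facts I need: $\gcd(a,b) \mid a$ by the definition of the greatest common divisor, and $a \mid n$ by hypothesis. Then I would invoke the transitivity of divisibility: since $\gcd(a,b) \mid a$ and $a \mid n$, it follows that $\gcd(a,b) \mid n$. Concretely, writing $a = \gcd(a,b)\cdot k$ and $n = a \cdot m$ for some integers $k,m$, we get $n = \gcd(a,b)\cdot(km)$, so $\gcd(a,b)$ divides $n$. This is precisely the condition required for membership in $\nu_n$, and therefore $\{a,b\} \in \nu_n$.

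There is no real obstacle here; the statement is essentially an immediate consequence of transitivity of divisibility combined with the basic property that the gcd divides each of its arguments. The only point worth a moment's care is the bookkeeping between the residue classes $[a]_n, [b]_n$ and the integer representatives $a, b$ with $0 \leq a, b < n$: the hypothesis $a \mid n$ and the conclusion $\gcd(a,b) \mid n$ are statements about these integer representatives, which is consistent with the convention the authors fixed just before the example (treating elements of $\mathbb{Z}_n$ as their least nonnegative representatives). Once that convention is in place, the argument is a one-line divisibility chain and requires nothing further.
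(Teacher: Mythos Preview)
Your proof is correct and follows exactly the approach indicated in the paper: the paper simply remarks that the result follows from the fact that $\gcd(a,b)$ divides $a$ (and $b$), which together with the hypothesis $a\mid n$ and transitivity of divisibility gives $\gcd(a,b)\mid n$. Your write-up is a faithful expansion of that one-line argument.
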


\begin{example}
Consider $ \mathbb{Z}_{6} = {\{0,1,2,3,4,5}\} $. Since $ 1,2 $ and $3 $ are all factors of $ 6 $ in $\mathbb{Z}_6$, unordered pairs consisting at least one of these numbers are gcd-pairs in $\mathbb{Z}_6$, i.e.,
	$\{0,1\},$ $\{0,2\},$ $\{0,3\},$ $\{1,1\},$ $\{1,2\},$ $\{1,3\},$ $\{1,4\},$ $\{1,5\},$ $\{2,2\},$ $\{2,3\},$ $\{2,4\},$ $\{2,5\},$ $\{3,3\},$ $\{3,4\},$ $\{3,5\}$.
\end{example}

We know that $1$ divides all integers. Thus, every pair of relatively prime numbers $a$ and $b$, $\{a,b\}$ is a gcd-pair. One can show that the reverse implication of this statement is not true in general. Hence, we need some exact conditions added in the statement, namely, $a$ or $b$ are relatively prime with $n$, or equivalently, $a$ or $b$ are in $U(\mathbb{Z}_n)$.

\begin{example}\label{ExListof9}
In $\mathbb{Z}_9$, all pairs of numbers listed below are gcd-pairs in $\mathbb{Z}_9$,

$\{0,3\},$ $\{3,3\},$ $\{3,6\},$ $\{0,1\},$ $\{1,1\},$ $\{1,2\},$ $\{1,3\},$ $\{2,3\},$ $\{1,4\},$ $\{3,4\},$ 

$\{1,5\},$ $\{2,5\},$ $\{3,5\},$ $\{4,5\},$ $\{1,6\},$ $\{5,6\},$ $\{1,7\},$ $\{2,7\},$ $\{3,7\},$ $\{4,7\},$ 

$\{5,7\},$ $\{6,7\},$ $\{1,8\},$ $\{3,8\},$ $\{5,8\},$ $\{7,8\}$. \\
The first 3 pairs provide the same greatest common divisor equal to $3$. For the others, the numbers are relatively prime in every pair.
\end{example}

\begin{theorem}
For every $a,b\in \mathbb{Z}_n$ with $\gcd(a,n)=1$, if $\{a,b\}\in \nu_n$, then $\gcd(a,b)= 1$.
\end{theorem}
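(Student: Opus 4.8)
The plan is to set $d=\gcd(a,b)$ and argue directly that $d=1$ from the defining conditions, so neither induction nor case analysis is required. First I would record the two divisibility facts that are in force. By the definition of the greatest common divisor, $d\mid a$. By the hypothesis $\{a,b\}\in\nu_n$ together with the definition of $\nu_n$, the gcd-pair condition gives $\gcd(a,b)\mid n$, that is, $d\mid n$.

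Next I would combine these two facts. Since $d$ divides both $a$ and $n$, it is a common divisor of $a$ and $n$, and by the universal property of the greatest common divisor every common divisor of $a$ and $n$ divides $\gcd(a,n)$. Hence $d\mid\gcd(a,n)$. Invoking the hypothesis $\gcd(a,n)=1$ then forces $d\mid 1$, and since $d$ is a nonnegative integer this yields $d=1$, which is exactly $\gcd(a,b)=1$.

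I do not anticipate a genuine obstacle here: the statement collapses as soon as the definition of $\nu_n$ is unfolded, the only substantive step being the observation that the gcd-pair condition $d\mid n$ and the elementary fact $d\mid a$ together pin $d$ down as a divisor of $\gcd(a,n)$. It is worth flagging why the unit hypothesis $\gcd(a,n)=1$ cannot be dropped: without it, $d$ need only divide $n$, not $1$, so $\gcd(a,b)$ may exceed $1$ while $\{a,b\}$ remains a gcd-pair. For instance, the pair $\{2,4\}$ in $\mathbb{Z}_6$ from Example~\ref{ExListof6} satisfies $\gcd(2,4)=2\mid 6$, so it lies in $\nu_6$, yet $\gcd(2,4)=2\neq 1$; here $\gcd(2,6)=2\neq 1$, so the unit hypothesis indeed fails. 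This confirms that the relative primality of $a$ with $n$ is precisely the ingredient that rules out nontrivial common divisors.
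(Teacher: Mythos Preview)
Your proof is correct and uses essentially the same idea as the paper: the key observation is that $d=\gcd(a,b)$ divides $a$, and together with the divisibility of $n$ this forces $d\mid\gcd(a,n)=1$. The paper merely packages the argument as a contrapositive (assume $\gcd(a,b)\neq 1$ and conclude $\{a,b\}\notin\nu_n$) rather than the direct form you give, but the content is identical.
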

\begin{proof}
By contrapositive, we suppose that $\gcd(a,b)\neq 1$. Since $\gcd(a,n)=1$ and $\gcd(a,b)\mid b$, this implies that $\gcd(a,b)\mid n$. Thus, $\{a,b\}\notin \nu_n$. 
\end{proof}

The contrapositive of the previous theorem is also useful if we know all elements in $U(\mathbb{Z}_n)$. We can check that $\{a,b\}$ is not a gcd-pair by cosidering that $\gcd(a,b)\neq 1$.

\begin{example}
Consider $\mathbb{Z}_9 = \{0,1,2,\ldots, 8\}$ and $U(\mathbb{Z}_9)=\{0,1,2,4,5,7,8\}$. Since $\gcd(4,6)=2\neq 1$ and $\gcd(4,8) =4\neq 1$, we have $\{4,6\}, \{4,8\}\notin \nu_n$.
\end{example}

\section{gcd-Pairs Enumeration}

This section provides the formulas to count gcd-pair. Some relations between the numbers of gcd-pairs in $\mathbb{Z}_n$ restricted to its subsets are also considered. The first theorem of this part provides a formula related to the Euler's totient function $\phi$ for counting elements in $\nu_{p^k}$, where $p$ is a prime number and $k$ is a positive integer.

\begin{theorem}\label{ThmNuP}
Let $p$ be a prime number and $k$ be a positive integer. Then 
    $$|\nu_{p^k}| = k+\sum\limits_{i=1}^k\sum\limits_{j=1}^{p^i-1}\phi(j).$$
In particular, $|\nu_p| = 1+\sum\limits_{k=1}^{p-1}\phi(k)$.
\end{theorem}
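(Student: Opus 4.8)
The plan is to reduce the divisibility condition to a statement about powers of $p$ and then to stratify the pairs according to the exact value of their greatest common divisor. First I would observe that the divisors of $p^k$ are precisely $1,p,p^2,\dots,p^k$, so $\gcd(a,b)\mid p^k$ holds if and only if $\gcd(a,b)$ is a power of $p$; the degenerate value $\gcd(a,b)=0$ arises only for the pair $\{0,0\}$, which is thereby excluded. I would then split $\nu_{p^k}$ into those pairs that contain $0$ and those whose two entries are both nonzero.

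For a pair $\{0,b\}$ with $1\le b\le p^k-1$ we have $\gcd(0,b)=b$, so the pair lies in $\nu_{p^k}$ exactly when $b\mid p^k$, that is, $b\in\{1,p,\dots,p^{k-1}\}$. This contributes precisely $k$ pairs and will account for the leading term $k$ in the formula. For the remaining pairs $\{a,b\}$ with $1\le a,b\le p^k-1$, I would group them by the exact value $\gcd(a,b)=p^{t}$. Writing $a=p^{t}a'$ and $b=p^{t}b'$, the equality $\gcd(a,b)=p^{t}$ is equivalent to $\gcd(a',b')=1$, and the bound $a,b\le p^k-1$ translates into $1\le a',b'\le\lfloor (p^k-1)/p^{t}\rfloor=p^{\,k-t}-1$, with $t$ ranging over $0\le t\le k-1$. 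This sets up a bijection between the pairs with $\gcd=p^{t}$ and the unordered coprime pairs inside $\{1,\dots,p^{\,k-t}-1\}$.

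The remaining ingredient is the identity that the number of unordered pairs $\{x,y\}$ with $1\le x,y\le m$ and $\gcd(x,y)=1$ equals $\sum_{j=1}^{m}\phi(j)$. I would establish this by counting the off-diagonal coprime pairs according to their larger entry $y$, which yields $\sum_{y=2}^{m}\phi(y)$, and then adding the single admissible diagonal pair $\{1,1\}$ (since $\gcd(x,x)=x=1$ forces $x=1$); because $\phi(1)=1$, the two contributions combine to $\sum_{j=1}^{m}\phi(j)$. Applying this with $m=p^{\,k-t}-1$, summing over $t$, and reindexing by $i=k-t$ produces $\sum_{i=1}^{k}\sum_{j=1}^{p^{i}-1}\phi(j)$; adding the $k$ pairs through $0$ gives the claimed total, and the ``in particular'' statement is simply the case $k=1$.

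I expect the main obstacle to be the coprime-pair identity, since it is where the totient function actually enters and where one must treat the diagonal term and the range endpoints correctly. The only other delicate points are the floor computation $\lfloor (p^k-1)/p^{t}\rfloor=p^{\,k-t}-1$ (which holds uniformly for $0\le t\le k-1$ because $1/p^{t}\le 1$) and the observation that the condition $t\le k-1$ keeps $k-t\ge 1$, so the degenerate range $m=p^{0}-1=0$ never occurs. I note that Theorem~\ref{ThmgcdpairPro} is consistent with this decomposition, but the argument is essentially a direct enumeration and does not require it.
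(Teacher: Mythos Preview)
Your argument is correct. The paper, however, proceeds by induction on $k$: it first establishes the base case $|\nu_p|=1+\sum_{j=1}^{p-1}\phi(j)$ (counting coprime pairs in $\{0,1,\dots,p-1\}$, with the extra $1$ coming from $\{0,1\}$), and in the inductive step it peels off from $\nu_{p^{k+1}}$ the pairs with $\gcd=1$ (again $1+\sum_{j=1}^{p^{k+1}-1}\phi(j)$) and observes that the remaining pairs, having both entries divisible by $p$, biject via $(rp,sp)\leftrightarrow(r,s)$ with all of $\nu_{p^k}$.

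Your direct stratification by the exact power $p^t$ of the gcd is essentially the unfolded version of that induction: the $k$ copies of the ``$+1$'' that the paper accumulates through the recursion are exactly your $k$ pairs $\{0,p^{t}\}$ for $0\le t\le k-1$, and each inductive layer's coprime count $\sum_{j=1}^{p^{k+1-t}-1}\phi(j)$ matches one of your strata after the reindexing $i=k-t$. The advantage of your approach is that it avoids setting up the induction and makes the role of the leading $k$ transparent; the paper's approach has the pedagogical virtue of isolating the one-step recursion $|\nu_{p^{k+1}}|=1+\sum_{j=1}^{p^{k+1}-1}\phi(j)+|\nu_{p^k}|$, which is of independent interest. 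Both rely on the same coprime-pair identity $\#\{\{x,y\}:1\le x,y\le m,\ \gcd(x,y)=1\}=\sum_{j=1}^{m}\phi(j)$, and your justification of it (including the diagonal term $\{1,1\}$) is clean.
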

\begin{proof}
    We prove this theorem by mathematical induction. Let us consider the basis step $k=1$. We will show that $|\nu_{p}| = 1+\sum\limits_{j=1}^{p-1}\phi(j).$ Since $p$ is prime, only positive integers $1$ and $p$ divide $p$. Then for every $a, b\in \mathbb{Z}_p$, $\{a,b\}\in \nu_n$ if and only if $\gcd(a,b) = 1$. For each $b = 1, 2,\ldots, p-1$, we see that the number of gcd-pairs $\{a,b\}$ in $\mathbb{Z}_p$ such that $0< a\leq b$ is equal to $\phi(b)$. Combine with $\{0,1\}$, so we have all gcd-pairs in $\mathbb{Z}_n$. Therefore, $|\nu_{p}| = 1+\sum\limits_{j=1}^{p-1}\phi(j).$
    
    For the inductive step, we assume that $|\nu_{p^k}| = k+\sum\limits_{i=1}^k\sum\limits_{j=1}^{p^i-1}\phi(j).$ Then consider a gcd-pair contained in $\nu_{p^{k+1}}$. Its greatest common divisor is one of integers $1, p, p^2,\ldots, p^{k}$. The number of gcd-pairs, whose greatest common divisors are $1$, is equal to $1+\sum\limits_{j=1}^{p^{k+1}-1}\phi(j).$ The rests are gcd-pairs whose greatest common divisor are greater than $1$. They consist of numbers in the set $\{0, p, 2p,\ldots, (p^{k+1}-1)p\}$ which are divisible by $p$. We see that for every $j= 1,2,\ldots,k$ and every pair of nonnegative integers $r,s \leq p^{k+1}-1$, it is obvious that $\gcd(rp,sp)= p^j$ if and only if $\gcd(r,s)= p^{j-1}$. This implies that the number of gcd-pairs consisting of numbers in $\{0, p, 2p,\ldots, (p^{k+1}-1)p\}$ is equal to $|\nu_{p^k}|$. Thus,
    \begin{align*}
        |\nu_{p^{k+1}}| &= 1+\sum\limits_{j=1}^{p^{k+1}-1}\phi(j) + |\nu_{p^k}|\\
            &= 1+\sum\limits_{j=1}^{p^{k+1}-1}\phi(j) + k+\sum\limits_{i=1}^k\sum\limits_{j=1}^{p^i-1}\phi(j)\\
            &= (k+1) + \sum\limits_{i=1}^{k+1}\sum\limits_{j=1}^{p^i-1}\phi(j).
    \end{align*}
    Therefore, the proof is now completed.
\end{proof}

In the case of a composite number $n$, if a prime $p$ divides $n$, $\{p, kp\}$ is a gcd-pair in $\mathbb{Z}_{n}$ for every $k=0,1,\ldots p-1$. We see that gcd-pairs in this formula provide the greatest common divisor $p$. Therefore, we obtain the following corollary immediately.

\begin{corollary}
For every composite number $n$, $|\nu_n| > 1+\sum\limits_{k=1}^{n-1}\phi(k)$
\end{corollary}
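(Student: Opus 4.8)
The plan is to recognize the quantity $1+\sum_{k=1}^{n-1}\phi(k)$ not as a count special to prime moduli, but as the number of \emph{coprime} gcd-pairs in $\mathbb{Z}_n$ for arbitrary $n$. The counting carried out in the basis step of Theorem~\ref{ThmNuP} never uses primality of the modulus: for each $b=1,2,\ldots,n-1$ there are exactly $\phi(b)$ pairs $\{a,b\}$ with $0<a\le b$ and $\gcd(a,b)=1$, and together with the single pair $\{0,1\}$ (the only pair of the form $\{0,b\}$ that is coprime, since $\gcd(0,b)=b$) these exhaust all pairs in $\mathbb{Z}_n$ with $\gcd$ equal to $1$. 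Hence
\[
    1+\sum_{k=1}^{n-1}\phi(k) = \big|\{\, \{a,b\}\in\nu_n : \gcd(a,b)=1 \,\}\big|.
\]

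Since every gcd-pair satisfies $\gcd(a,b)\mid n$, I would partition $\nu_n$ according to the value of $\gcd(a,b)$, separating the coprime pairs from the rest. The displayed identity shows that the coprime pairs contribute precisely $1+\sum_{k=1}^{n-1}\phi(k)$ to $|\nu_n|$, so the strict inequality reduces to exhibiting at least one gcd-pair in $\mathbb{Z}_n$ whose $\gcd$ exceeds $1$.

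For composite $n$ such a pair is immediate from the remark preceding the statement. Let $p$ be a prime divisor of $n$; since $n$ is composite, $p$ is a proper divisor, so its least nonnegative representative satisfies $p<n$, and the pair $\{0,p\}$ (equivalently $\{p,p\}$) lies in $\nu_n$ because $\gcd(0,p)=p\mid n$, while $\gcd(0,p)=p>1$ shows it is not coprime. More generally, taking $p$ to be the smallest prime divisor guarantees $n\ge p^2$, whence $kp<n$ for every $k=0,1,\ldots,p-1$, so the whole family $\{p,kp\}$ from the preamble consists of legitimate gcd-pairs with $\gcd$ equal to $p$. Adjoining even one such non-coprime pair to the coprime count yields $|\nu_n|>1+\sum_{k=1}^{n-1}\phi(k)$. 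The only point requiring care — and the nearest thing to an obstacle — is confirming that the chosen representatives stay strictly below $n$; this is exactly where compositeness enters, through $p<n$ (and through $n\ge p^2$ for the full family), and it fails precisely when $n$ is prime, consistent with Theorem~\ref{ThmNuP} giving equality in that case.
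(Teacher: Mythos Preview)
Your proposal is correct and follows essentially the same approach as the paper: identify $1+\sum_{k=1}^{n-1}\phi(k)$ as the count of coprime gcd-pairs (reusing the basis-step computation from Theorem~\ref{ThmNuP}, which does not depend on primality), and then exhibit for composite $n$ an additional gcd-pair with $\gcd$ equal to a prime divisor $p$ of $n$---exactly the family $\{p,kp\}$ mentioned in the paper's preamble to the corollary. Your treatment is in fact slightly more careful than the paper's informal remark, since you explicitly verify that the representatives $kp$ remain below $n$ by taking $p$ to be the smallest prime divisor (so that $n\ge p^2$).
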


\begin{example}
    From Example \ref{ExListof9} and Example \ref{ExListof6}, we have $|\nu_9| = 26$ and $|\nu_6| = 16$, respectively. Then
    \begin{flalign*}
        &&|\nu_9| &= 2 + (1+1)+(1 + 1 + 2 + 2 + 4 + 2 + 6 + 4)&&\\
        &&  &= 2 + \sum\limits_{j=1}^{2}\phi(j) + \sum\limits_{k=1}^{8}\phi(k)&&\\
        &&  &= 2 + \sum\limits_{i=1}^2\sum\limits_{j=1}^{3^i-1}\phi(j),&&\\
        &\text{and}& |\nu_6| &> 1 + 1 + 1 + 2 + 2 + 4 = 1 +\sum\limits_{k=1}^{5}\phi(k).&&
    \end{flalign*}
\end{example}

Let $D_n$ be the set of all nontrivial positive divisors of $n$, that is, $D_n = \{d\in \mathbb{N} : d\mid n \text{ and } d\neq 1\}$. For every $d\in D_n$ define $S'_d = \{rd\in \mathbb{Z}_n : \gcd(rd, n) = d \text{ and } 1\leq r < \frac{n}{d}\}$. From \cite[Theorem 8]{SajanaNoThmProp}, $\{S'_d : d\in D_n\}$ is a partition of $Z(\mathbb{Z}_n)$, i.e., $Z(\mathbb{Z}_n)= \bigsqcup\limits_{d\in D_n} S'_d$. If $n$ is composite, the example below shows that there is at least one gcd-pair consisting of elements from different cells in that partition.

\begin{example}
    In $\mathbb{Z}_6$, we have $S'_2= \{2, 4\}$ and $S'_3 = \{3\}$. It is easily seen that $\{2, 3\}$ and $\{3, 4\}$ are gcd-pairs in $\mathbb{Z}_6$ such that numbers in each pair belong to different cells in the partition $\{S'_2, S'_3\}$ of $Z(\mathbb{Z}_6)$ 
\end{example}

\begin{theorem}\label{ThmNuZZn}
$|\nu_{n,Z(\mathbb{Z}_n)}| \geq \sum\limits_{d\in D_n}|\nu_{m,U(\mathbb{Z}_m)}| $ where $m = \frac{n}{d}$.
\end{theorem}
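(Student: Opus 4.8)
The plan is to exhibit, for each nontrivial divisor $d\in D_n$, an injection
\[
\Phi_d \colon \nu_{m,U(\mathbb{Z}_m)} \longrightarrow \nu_{n,Z(\mathbb{Z}_n)},
\qquad \{r,s\}\longmapsto \{rd,sd\}, \quad m=\tfrac{n}{d},
\]
and then to show that the images $\Phi_d\bigl(\nu_{m,U(\mathbb{Z}_m)}\bigr)$ are pairwise disjoint as $d$ ranges over $D_n$. Granting this, the gcd-pairs lying inside $Z(\mathbb{Z}_n)$ contain at least $\sum_{d\in D_n}|\nu_{m,U(\mathbb{Z}_m)}|$ distinct elements, which is precisely the asserted bound.

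First I would record a cleaner description of the cells of the partition. Dividing through by $d$, one has $\gcd(rd,n)=d$ exactly when $\gcd(r,\tfrac{n}{d})=1$, so
\[
S'_d = \{\, rd : 1\le r < m \text{ and } \gcd(r,m)=1 \,\},
\]
that is, $S'_d$ consists of $d$ times the units of $\mathbb{Z}_m$. This identifies the admissible entries and is the bridge between $S'_d$ and $U(\mathbb{Z}_m)$.

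Next I would verify that $\Phi_d$ is well defined. Take $\{r,s\}\in\nu_{m,U(\mathbb{Z}_m)}$, so both $r$ and $s$ are units modulo $m$ and $\{r,s\}$ is a gcd-pair in $\mathbb{Z}_m$; by the earlier result that a gcd-pair containing a unit must have greatest common divisor $1$, we get $\gcd(r,s)=1$. Since $1\le r,s<m$, the representatives $rd,sd$ lie in $[d,n)$ and belong to $S'_d\subseteq Z(\mathbb{Z}_n)$, while the key identity $\gcd(rd,sd)=d\,\gcd(r,s)=d$ shows at once that $\gcd(rd,sd)\mid n$ (hence $\{rd,sd\}\in\nu_{n,Z(\mathbb{Z}_n)}$) and that \emph{every} pair in the image of $\Phi_d$ has greatest common divisor exactly $d$. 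Injectivity is immediate, since dividing both entries by $d$ recovers $\{r,s\}$.

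The main point, and the step I expect to carry the weight, is the disjointness of the images across distinct divisors; here the computation $\gcd(rd,sd)=d$ pays off. A pair in $\Phi_d\bigl(\nu_{m,U(\mathbb{Z}_m)}\bigr)$ has gcd $d$, so if it also lay in $\Phi_{d'}\bigl(\nu_{m',U(\mathbb{Z}_{m'})}\bigr)$ its gcd would equal $d'$, forcing $d=d'$. Hence the images are pairwise disjoint and
\[
|\nu_{n,Z(\mathbb{Z}_n)}| \;\ge\; \sum_{d\in D_n} \bigl|\Phi_d\bigl(\nu_{m,U(\mathbb{Z}_m)}\bigr)\bigr| \;=\; \sum_{d\in D_n} |\nu_{m,U(\mathbb{Z}_m)}|.
\]
The only loose end is the extreme divisor $d=n$, where $m=1$ and $S'_n=\varnothing$: the corresponding summand is empty and contributes nothing, so it may be discarded at the outset. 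The inequality (rather than equality) reflects that cross-cell gcd-pairs, such as coprime pairs with entries from two different cells $S'_d$, are never produced by any $\Phi_d$, as already illustrated by $\{2,3\}$ and $\{3,4\}$ in $\mathbb{Z}_6$.
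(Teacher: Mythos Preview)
Your proof is correct and follows essentially the same route as the paper: both identify $S'_d$ with $d\cdot U(\mathbb{Z}_m)$ and push gcd-pairs from $\nu_{m,U(\mathbb{Z}_m)}$ into $\nu_{n,Z(\mathbb{Z}_n)}$ via multiplication by $d$. The only minor difference is in the disjointness step: the paper implicitly uses that the $\nu_{n,S'_d}$ are disjoint because the cells $S'_d$ partition $Z(\mathbb{Z}_n)$, whereas you argue disjointness by computing $\gcd(rd,sd)=d$ exactly and reading off $d$ from the pair; both are immediate, and your version has the small advantage of making the image of $\Phi_d$ explicitly identifiable by its gcd value.
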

\begin{proof}
Let $d\in D_n$ and $m=\frac{n}{d}$, so $\gcd(rd, n) = d$ if and only if $\gcd(r,m) = \gcd(r,\frac{n}{d}) = 1$. Thus, $rd\in S'_d$ if and only if $r\in U(\mathbb{Z}_m)$. Now, suppose that $rd, sd\in S'_d$, or equivalently, $r, s \in U(\mathbb{Z}_m)$. We have $\gcd(rd, sd)\mid n$ if and only if $\gcd(r,s)\mid m$. This implies that $\{rd, sd\}\in \nu_{n,S'_d}$ if and only if $\{r,s\}\in \nu_{m,U(\mathbb{Z}_m)}$. This matching provides one-to-one correspondence between $\nu_{n,S'_d}$ and $\nu_{m,U(\mathbb{Z}_m)}$. By the previous example, $|\nu_{n,Z(\mathbb{Z}_n)}| \geq \sum\limits_{d\in D_n}|\nu_{m,U(\mathbb{Z}_m)}| $ where $m = \frac{n}{d}$.
\end{proof}

\begin{corollary}
    Let $p$ and $q$ be different prime numbers, then $$|\nu_{pq,Z(\mathbb{Z}_{pq})}|\geq |\nu_p| + |\nu_q| + p + q - 5.$$
\end{corollary}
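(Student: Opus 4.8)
The plan is to specialize Theorem~\ref{ThmNuZZn} to $n=pq$ and then recover the extra cross-cell gcd-pairs that the theorem's bound leaves out. First I would record that the nontrivial divisors are $D_{pq}=\{p,q,pq\}$, with corresponding quotients $m=q,\,p,\,1$. The cell $S'_{pq}$ is empty, since its defining range $1\le r<\frac{pq}{pq}=1$ is vacuous, so the $d=pq$ term contributes nothing. Moreover, the one-to-one correspondences constructed in the proof of Theorem~\ref{ThmNuZZn} identify $\nu_{pq,S'_p}$ with $\nu_{q,U(\mathbb{Z}_q)}$ and $\nu_{pq,S'_q}$ with $\nu_{p,U(\mathbb{Z}_p)}$, so these restricted counts are in fact \emph{equalities}. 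Since $Z(\mathbb{Z}_{pq})=S'_p\sqcup S'_q$, every gcd-pair in $Z(\mathbb{Z}_{pq})$ is either contained in one cell or straddles both, and the within-cell gcd-pairs number exactly $|\nu_{p,U(\mathbb{Z}_p)}|+|\nu_{q,U(\mathbb{Z}_q)}|$.

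Next I would convert the restricted counts into the full counts $|\nu_p|$ and $|\nu_q|$. For a prime $p$ one has $U(\mathbb{Z}_p)=\{1,\ldots,p-1\}$, so the unique element of $\mathbb{Z}_p$ outside $U(\mathbb{Z}_p)$ is $0$. The gcd-pairs of $\nu_p$ meeting $0$ are exactly the pairs $\{0,b\}$ with $\gcd(0,b)=b\mid p$; as $0\le b<p$ and $p$ is prime, the only such pair is $\{0,1\}$. Hence $|\nu_{p,U(\mathbb{Z}_p)}|=|\nu_p|-1$, and symmetrically $|\nu_{q,U(\mathbb{Z}_q)}|=|\nu_q|-1$.

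It remains to exhibit enough cross-cell gcd-pairs. I would use two explicit families: $\{p,sq\}$ for $s=1,\ldots,p-1$ and $\{rp,q\}$ for $r=1,\ldots,q-1$, each having one entry in $S'_p$ and one in $S'_q$ (every entry is a nonzero multiple of $p$ or of $q$, hence a zero divisor, so the pairs indeed live in $Z(\mathbb{Z}_{pq})$). Each such pair is a gcd-pair because, $p$ and $q$ being distinct primes, $p\nmid sq$ whenever $0<s<p$ and $q\nmid rp$ whenever $0<r<q$, so $\gcd(p,sq)=\gcd(rp,q)=1\mid pq$. The two families overlap only in the single pair $\{p,q\}$, giving $(p-1)+(q-1)-1=p+q-3$ distinct cross-cell gcd-pairs. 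Adding these to the within-cell total yields
$$|\nu_{pq,Z(\mathbb{Z}_{pq})}|\ge (|\nu_p|-1)+(|\nu_q|-1)+(p+q-3)=|\nu_p|+|\nu_q|+p+q-5.$$

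The work here is bookkeeping rather than depth: I must confirm that the exhibited cross pairs are pairwise distinct and disjoint from the within-cell pairs (automatic, since they straddle the two cells), and carefully handle the one double-counted pair $\{p,q\}$, which is precisely what produces the constant $-5=(-1)+(-1)+(-3)$. The point most likely to trip up the argument is ensuring the $d=pq$ term contributes $0$ and that $0$ is the only non-unit relevant to the boundary correction for each prime; once these are pinned down, the inequality falls out by summation.
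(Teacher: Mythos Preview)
Your proof is correct and follows essentially the same approach as the paper: apply the within-cell bijections from Theorem~\ref{ThmNuZZn}, convert $|\nu_{p,U(\mathbb{Z}_p)}|$ to $|\nu_p|-1$ via the single pair $\{0,1\}$, and then add the cross-cell families $\{p,sq\}$ and $\{rp,q\}$, which overlap only at $\{p,q\}$ to give $p+q-3$ extra pairs. Your treatment is in fact slightly more careful than the paper's in two places: you correctly note $pq\in D_{pq}$ (the paper writes $D_{pq}=\{p,q\}$, omitting $pq$) and dispose of it via the empty range for $S'_{pq}$, and you explicitly verify the cross-cell pairs have $\gcd$ equal to $1$.
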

\begin{proof}
    By the definition of $S'_d$, one can compute that 
    $$S'_p= \{p,2p,\ldots,(q-1)p\} \text{ and } S'_q= \{q,2q,\ldots,(p-1)q\}.$$
    Since $D_{pq} = \{p, q\}$, $Z(\mathbb{Z}_{pq}) = S'_p \sqcup S'_q$. By Theorem \ref{ThmNuZZn}, $|\nu_{pq,Z(\mathbb{Z}_{pq})}|\geq |\nu_{p,U(\mathbb{Z}_p)}| + |\nu_{q,U(\mathbb{Z}_q)}|$. Since $p$ is prime, $U(\mathbb{Z}_p)= \mathbb{Z}_p\setminus\{0\}$. We can see that $\{0,1\}$ is the only one gcd-pair in $\mathbb{Z}_p$ containing $0$. Then $|\nu_{p,U(\mathbb{Z}_p)}| = |\nu_p| - 1 $. By the same arguments, we have $|\nu_{q,U(\mathbb{Z}_q)}| = |\nu_q| - 1 $. Hence,
    \begin{equation}\label{EqPQ1stmodify}
        |\nu_{pq,Z(\mathbb{Z}_{pq})}|\geq |\nu_p| + |\nu_q| - 2.
    \end{equation}
    Next, consider unordered pairs of the forms $\{p,x\}$ and $\{y,q\}$ where $x\in S'_q$ and $y\in S'_p$. We see that $\gcd(p,x)= \gcd(y,q) = 1$. Then they are gcd-pairs in $\mathbb{Z}_{pq}$ for every $x\in S'_q$ and $y\in S'_p$. The number of gcd-pairs of these forms is equal to $|S'_q| + |S'_p| -1 = p+q-3$. This count focuses gcd-pairs whose elements come from different cells in the partition that never occurs between $\mathbb{Z}_p$ and $\mathbb{Z}_q$. Thus, the number $p+q-3$ can be added to the right-hand side of \eqref{EqPQ1stmodify} without overcounting. Therefore, 
    \[|\nu_{pq,Z(\mathbb{Z}_{pq})}|\geq |\nu_p| + |\nu_q| - 2 + p+q-3 = |\nu_p| + |\nu_q| + p+q-5. \qedhere\]
\end{proof}

\begin{example}
    Consider $Z(\mathbb{Z}_{15}) = \{3,5,6,9,10,12\}$. Then
    	\begin{align*}
		\nu_{15,Z(\mathbb{Z}_{15})} = \big\{&\{3,3\}, \{3,5\}, \{3,6\}, \{3,9\}, \{3,10\}, \{3,12\}, \{5,5\}, \{5,6\},\\
		& \{5,9\}, \{5,10\}, \{5,12\}, \{6,9\}, \{9,10\}, \{9,12\}\big\}.
	\end{align*}
	From Theorem \ref{ThmNuP}, we have $|\nu_3| = 3$ and $|\nu_5| = 7$. If we let $p = 3$ and $q = 5$, then
	$|\nu_{15,Z(\mathbb{Z}_{15})}| = 14 \geq 3 + 7 + 3 + 5 - 5 = |\nu_3| + |\nu_5| + p + q - 5.$
\end{example}

\begin{corollary}
    Let $p$ be a prime number, then
    \begin{enumerate}
        \item $|\nu_{2p,Z(\mathbb{Z}_{2p})}| = |\nu_p| + p - 1$ for every $p\neq 2$,
        \item $|\nu_{3p,Z(\mathbb{Z}_{3p})}| = |\nu_p| + p + \left\lceil\frac{p-1}{2}\right\rceil$ for every $p\neq 3$.
    \end{enumerate}
\end{corollary}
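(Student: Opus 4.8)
The plan is to exploit the two-cell structure of the partition $Z(\mathbb{Z}_n) = \bigsqcup_{d\in D_n} S'_d$ in each case, splitting every gcd-pair of zero divisors into those lying inside a single cell and those straddling the two cells. The within-cell contributions are counted exactly by the bijection of Theorem \ref{ThmNuZZn}, which gives $|\nu_{n,S'_d}| = |\nu_{m,U(\mathbb{Z}_m)}|$ with $m = n/d$; the cross-cell contributions I would count directly from the condition $\gcd\mid n$. Because each partition here has exactly two nonempty cells, these two tallies account for every element of $\nu_{n,Z(\mathbb{Z}_n)}$, so the inequality of Theorem \ref{ThmNuZZn} is upgraded to an equality.

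For part (1), with $p$ odd we have $D_{2p} = \{2,p,2p\}$ and nonempty cells $S'_2 = \{2,4,\ldots,2(p-1)\}$ and $S'_p = \{p\}$. The bijection gives $|\nu_{2p,S'_2}| = |\nu_{p,U(\mathbb{Z}_p)}| = |\nu_p|-1$ (using, as in the previous corollary, that $\{0,1\}$ is the unique gcd-pair of $\mathbb{Z}_p$ containing $0$) and $|\nu_{2p,S'_p}| = |\nu_{2,U(\mathbb{Z}_2)}| = 1$, coming from $\{p,p\}$. For the cross-cell pairs $\{2r,p\}$ with $1\leq r<p$, the oddness of $p$ forces $\gcd(2r,p)=1\mid 2p$, so all $p-1$ of them are gcd-pairs. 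Adding the three tallies yields $(|\nu_p|-1)+1+(p-1) = |\nu_p|+p-1$.

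For part (2), take first $p\neq 2,3$, so $D_{3p} = \{3,p,3p\}$ with cells $S'_3 = \{3,6,\ldots,3(p-1)\}$ and $S'_p = \{p,2p\}$. The bijection gives $|\nu_{3p,S'_3}| = |\nu_p|-1$ and $|\nu_{3p,S'_p}| = |\nu_{3,U(\mathbb{Z}_3)}| = 2$, the latter from $\{1,1\}$ and $\{1,2\}$ (note $\{2,2\}$ is excluded since $2\nmid 3$). The cross-cell pairs are $\{3r,p\}$ and $\{3r,2p\}$ for $1\leq r<p$. Every $\{3r,p\}$ satisfies $\gcd(3r,p)=1$ and is a gcd-pair, contributing $p-1$. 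The pairs $\{3r,2p\}$ are the delicate ones: since $3p$ is odd, $\gcd(3r,2p)$ divides $3p$ exactly when it is odd, i.e.\ when $r$ is odd, giving $(p-1)/2$ pairs. Summing yields $(|\nu_p|-1)+2+(p-1)+(p-1)/2 = |\nu_p|+p+(p-1)/2$, which equals $|\nu_p|+p+\left\lceil\frac{p-1}{2}\right\rceil$ since $(p-1)/2$ is an integer for odd $p$. The remaining case $p=2$ (so $n=6$) I would verify directly: $|\nu_{6,Z(\mathbb{Z}_6)}| = 5 = |\nu_2|+2+\lceil 1/2\rceil$, which is exactly where the ceiling genuinely rounds up.

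The main obstacle is the cross-cell count for $\{3r,2p\}$ in part (2): unlike every other family here, its gcd is not forced to be $1$, and isolating the divisibility-by-$3p$ requirement as a pure parity condition on $r$ is what produces the $\left\lceil\frac{p-1}{2}\right\rceil$ term together with the split between odd $p$ and $p=2$. Secondary care is needed to confirm that the two-cell partitions leave no gcd-pair uncounted, so that Theorem \ref{ThmNuZZn} is attained with equality rather than merely as a bound.
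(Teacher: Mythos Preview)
Your proof is correct and follows essentially the same approach as the paper: both decompose $\nu_{n,Z(\mathbb{Z}_n)}$ according to the two-cell partition $S'_2 \sqcup S'_p$ (resp.\ $S'_3 \sqcup S'_p$), use the bijection of Theorem~\ref{ThmNuZZn} to count the pairs lying entirely in the larger cell, and count the remaining pairs directly. Your three-way split (within $S'_d$, within $S'_p$, cross-cell) is a slightly finer version of the paper's two groups, and your explicit separation of the case $p=2$ in part (2) makes transparent why the ceiling is needed, whereas the paper absorbs that case silently into the expression $\lceil |S'_3|/2\rceil$.
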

\begin{proof}
    The methodology is similar to the two results above. We begin with the first item. In this case, we have $Z(\mathbb{Z}_{2p}) = S'_2 \sqcup S'_p $ such that $$S'_2 = \{2, 2(2), \ldots, 2(p-1)\} \text{ and } S'_p = \{p\}.$$ Then gcd-pairs in $\nu_{2p,Z(\mathbb{Z}_{2p})}$ will be separated into two groups for easier counting as follows:
    \begin{description}
        \item[Group 1] Gcd-pairs consist of only elements in $S'_2$.
        \item[Group 2] At least one of elements in each gcd-pair is an element in $S'_p$.
    \end{description}
    We can compute that Group 1 contains $|\nu_{p,U(\mathbb{Z}_p)}|$ gcd-pairs and Group 2 contains $|S'_2| + 1$ gcd-pairs. Therefore,
    \[|\nu_{2p,Z(\mathbb{Z}_{2p})}| = |\nu_{p,U(\mathbb{Z}_p)}| + |S'_2| + 1 = (|\nu_p| - 1) + (p - 1) + 1 = |\nu_p| + p - 1.\]
    
    For the second item, we have $Z(\mathbb{Z}_{3p}) = S'_3 \sqcup S'_p $ such that 
        $$S'_3 = \{3, 3(2), \ldots, 3(p-1)\} \text{ and } S'_p = \{p, 2p\}.$$ 
    The proof follows from that of the first item. The number of gcd-pairs of these two cases are different only in Group 2. In the case of $Z(\mathbb{Z}_{3p})$, the number of gcd-pairs in Group 2 is $|S'_3| + \left\lceil\frac{|S'_3|}{2}\right\rceil + 2$. Therefore,
    \begin{align*}
        |\nu_{3p,Z(\mathbb{Z}_{3p})}| 
        &= |\nu_{p,U(\mathbb{Z}_p)}| + |S'_3| + \left\lceil\frac{|S'_3|}{2}\right\rceil + 2\\
        &= (|\nu_p| - 1) + (p - 1) + \left\lceil\frac{p-1}{2}\right\rceil + 2\\
        &= |\nu_p| + p + \left\lceil\frac{p-1}{2}\right\rceil.\qedhere
    \end{align*}
\end{proof}

\begin{example}
    Consider $Z(\mathbb{Z}_{6}) = \{2,3,4\}$. Then 
    	$$ \nu_{6,Z(\mathbb{Z}_{6})} = \big\{\{2,2\}, \{2,3\}, \{2,4\}, \{3,3\}, \{3,4\}\big\} $$
    If we let $6=2p$, we have $p = 3$. From Theorem \ref{ThmNuP}, $|\nu_p|=|\nu_3| = 3$. Thus,
    \[\nu_{6,Z(\mathbb{Z}_{6})} = 5 = 3+3-1 = |\nu_p|+p-1.\]
    If we let $6=3p$, we have $p = 2$. From Theorem \ref{ThmNuP}, $|\nu_p|=|\nu_2| = 2$. Thus,
    \[\nu_{6,Z(\mathbb{Z}_{6})} = 5 = 2+2+1 = |\nu_p|+p+.\left\lceil\frac{p-1}{2}\right\rceil.\]
\end{example}

\begin{corollary}
    Let $p$ be a prime number. Then $$|\nu_{p^k,Z(\mathbb{Z}_{p^k})}| = |\nu_{p^{k-1}}| - k +1,$$
    for every integer $k\geq 2$. In particular, $|\nu_{p,Z(\mathbb{Z}_{p})}| = 0$.
\end{corollary}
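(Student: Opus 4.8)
The plan is to realize $Z(\mathbb{Z}_{p^k})$ as the set of nonzero multiples of $p$ and to transport the counting problem back to $\mathbb{Z}_{p^{k-1}}$ via the same scaling correspondence used in the proofs of Theorems \ref{ThmNuP} and \ref{ThmNuZZn}. First I would record that the only divisors of $p^k$ are the powers $1, p, p^2, \ldots, p^k$, so a pair $\{a,b\}$ lies in $\nu_{p^k}$ precisely when $\gcd(a,b)$ is a power of $p$. Since the units of $\mathbb{Z}_{p^k}$ are exactly the residues coprime to $p$, the zero divisors are the nonzero multiples of $p$; writing $M = \{0, p, 2p, \ldots, (p^{k-1}-1)p\}$ for \emph{all} multiples of $p$ in $\mathbb{Z}_{p^k}$, we have $Z(\mathbb{Z}_{p^k}) = M \setminus \{0\}$.

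Next I would set up the bijection $rp \leftrightarrow r$ between $M$ and $\mathbb{Z}_{p^{k-1}} = \{0, 1, \ldots, p^{k-1}-1\}$. Using the scaling identity $\gcd(rp, sp) = p\,\gcd(r,s)$, one checks that $\gcd(rp, sp) \mid p^k$ if and only if $\gcd(r,s) \mid p^{k-1}$ (this is the $d = p$, $m = p^{k-1}$ instance of the divisibility equivalence already exploited earlier). Hence $\{rp, sp\} \in \nu_{p^k, M}$ if and only if $\{r, s\} \in \nu_{p^{k-1}}$, giving a one-to-one correspondence $\nu_{p^k, M} \leftrightarrow \nu_{p^{k-1}}$, so that $|\nu_{p^k, M}| = |\nu_{p^{k-1}}|$.

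Then I would isolate the pairs that must be discarded, namely those in $\nu_{p^k, M}$ containing $0$, since $0 \notin Z(\mathbb{Z}_{p^k})$. Under the correspondence, $0 = 0\cdot p$ matches $0 \in \mathbb{Z}_{p^{k-1}}$, so these correspond exactly to the gcd-pairs of $\mathbb{Z}_{p^{k-1}}$ containing $0$. A pair $\{0, r\}$ lies in $\nu_{p^{k-1}}$ iff $\gcd(0,r) = r$ divides $p^{k-1}$ with $0 < r < p^{k-1}$, i.e. $r \in \{1, p, p^2, \ldots, p^{k-2}\}$; there are exactly $k-1$ such pairs. Removing them yields
\[
|\nu_{p^k, Z(\mathbb{Z}_{p^k})}| = |\nu_{p^k, M}| - (k-1) = |\nu_{p^{k-1}}| - k + 1.
\]
Finally, the special case $k = 1$ is immediate: $\mathbb{Z}_p$ is a field, so $Z(\mathbb{Z}_p) = \emptyset$ and $|\nu_{p, Z(\mathbb{Z}_p)}| = 0$.

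I expect the only delicate point to be the bookkeeping around $0$: it is a multiple of $p$ and so lives in $M$, but it is not a zero divisor, and the pair $\{0,0\}$ is never a gcd-pair (as $\gcd(0,0) = 0 \nmid p^k$). Keeping the role of $0$ straight, both when counting the $k-1$ discarded pairs and when confirming the correspondence restricts correctly to $M \setminus \{0\} \leftrightarrow \mathbb{Z}_{p^{k-1}} \setminus \{0\}$, is the step that needs the most care, while the remaining divisibility verifications are routine.
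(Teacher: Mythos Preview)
Your proposal is correct and follows essentially the same route as the paper: both set up the scaling bijection $rp\leftrightarrow r$ between the multiples of $p$ in $\mathbb{Z}_{p^k}$ and $\mathbb{Z}_{p^{k-1}}$, use $\gcd(rp,sp)\mid p^k \iff \gcd(r,s)\mid p^{k-1}$, and then subtract the $k-1$ gcd-pairs of $\mathbb{Z}_{p^{k-1}}$ containing $0$. Your bookkeeping around $0$ is in fact slightly cleaner than the paper's, since you explicitly list all $k-1$ discarded pairs $\{0,r\}$ with $r\in\{1,p,\ldots,p^{k-2}\}$.
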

\begin{proof}
    In case $k=1$, $|\nu_{p,Z(\mathbb{Z}_{p})}| = 0$ because of $Z(\mathbb{Z}_{p})=\emptyset$. Now we assume that $k \geq 2$. In this case, $Z(\mathbb{Z}_{p^k}) = \{p,2p,\ldots ,({p^{k-1}}-1)p\}$. Then for every pair of positive integers $r, s\leq p^{k-1}-1$, we obtain that $\gcd(rp,sp)\mid p^k$ if and only if $\gcd(r,s)\mid p^{k-1}$. This provide one-to-one matching between all elements $\nu_{p^k,Z(\mathbb{Z}_{p^k})}$ and $\nu_{p^{k-1},(\mathbb{Z}_{p^{k-1}}\setminus\{0\})}$, so $|\nu_{p^k,Z(\mathbb{Z}_{p^k})}|=|\nu_{p^{k-1},(\mathbb{Z}_{p^{k-1}}\setminus\{0\})}|$. Since $\{0,p^j\}$ is also a gcd-pair for every $j=1,2,\ldots,k-2$, we have
    \[|\nu_{p^k,Z(\mathbb{Z}_{p^k})}| = |\nu_{p^{k-1}}| - (k-1)= |\nu_{p^{k-1}}| - k + 1.\qedhere\]
\end{proof}

\begin{example}
    Let $p=2$ and $k=3$, so $p^k = 8$ and $p^{k-1} = 4$. Since $Z(\mathbb{Z}_8)= \{2,3,6\}$ and $\mathbb{Z}_4 = \{0,1,2,3\}$, we have
    \begin{flalign*}
        &&\nu_{8,Z(\mathbb{Z}_8)} &= \big\{\{2,2\}, \{2,4\}, \{2,6\}, \{4,4\}, \{4,6\}\big\}&&\\
        &\text{and}& \nu_4 &= \big\{\{0,1\}, \{0,2\}, \{1,1\}, \{1,2\}, \{1,3\}, \{2,2\}, \{2,3\}\big\}.&&
    \end{flalign*}
    Hence, $|\nu_{8,Z(\mathbb{Z}_8)}| = 7-3+1 = |\nu_4| - k + 1$.
\end{example}

\section*{Coding}

Next, we provides algorithms to find, count, and check gcd-pairs in $\mathbb{Z}_n$ written in Python. Notice that the first two codes are for finding and counting gcd-pairs.

\begin{lstlisting}[language=Python, caption= ]
n = int(input("enter the number n: "))
gcdpair_no = 0 # gsd-pairs counter
for a in range(0,n):
    for b in range(a,n):
        aa = a
        bb = b if b != 0 else n+1
        while aa != 0: # calculate gcd(a,b)
            d = aa
            aa = bb%d
            bb = d
        if n%bb == 0: # gcd-pair checking (definition)
            print("{%d,%d}" % (a,b))
            gcdpair_no += 1
print("The number of gcd-pairs is", gcdpair_no)
\end{lstlisting}

Line 4 to 8 of the second code written using Theorem \ref{ThmgcdpairPro} replacing the code written from the definition of a gcd-pair. Thus, we can skip the sub-algorithm which finds the greatest common divisor of two numbers, so the performance of the second algorithm will be better. However, if we consider the set of nonnegative integers less than $n$, we will see that the factors of $n$ are rare. Thus, the performances of these two codes are not significantly different for every small number $n$ which have not many factors.

\begin{lstlisting}[language=Python, caption= ]
n = int(input("enter the number n: "))
gcdpair_no = 0 # gsd-pairs counter
for a in range(1,n):
    if n%a == 0: # gcd-pair checking (theorem) 
        print("{%d,%d}" % (a,0))
        for b in range(a,n):
            print("{%d,%d}" % (a,b))
        gcdpair_no = gcdpair_no + (n-a) + 1
    else:
        for b in range(a,n):
            aa = a
            bb = b
            while aa != 0: # calculate gcd(a,b)
                d = aa
                aa = bb%d
                bb = d
            if n%bb == 0: # gcd-pair checking (definition)
                print("{%d,%d}" % (a,b))
                gcdpair_no += 1
print("The number of gcd-pairs is", gcdpair_no)
\end{lstlisting}

We end this section with a code for gcd-pair checking. The input numbers $a$ and $b$ can be negative or greater than $n$ since we consider them as representatives of congruence classes. The algorithm will divide them by $n$ to find and use their remainders to investigate whether $\{a,b\}$ is a gcd-pair in $\mathbb{Z}_n$.

\begin{lstlisting}[language=Python, caption= ]
n = int(input("enter the number n: "))
a = int(input("enter the number a: "))
b = int(input("enter the number b: "))
ra = a%n # remainder of a
rb = b%n # remainder of b
if ra == 0 and rb == 0: # avoid gcd(0,0)
    print("{%d,%d} is not a gcd-pair in Z%d" % (a,b,n))
else:
    aa = ra
    bb = rb
    while aa != 0: # calculate gcd(ra,rb)
        d = aa
        aa = bb%d
        bb = d
    if n%bb == 0: # gcd-pair checking (Definition)
        print("{%d,%d} is a gcd-pair in Z%d" % (a,b,n))
    else:
        print("{%d,%d} is not a gcd-pair in Z%d" % (a,b,n))
\end{lstlisting}

\section{Graphs Representing gcd-Pairs}

The set $\nu_n$ can be considered as a family of edges of a graph with the vertex set $\mathbb{Z}_n$. It is interesting what properties of the graph has, so we investigate some those properties in this section. For the first illustration we demonstrate by the graph representing gcd-pairs in $\mathbb{Z}_5$

\begin{figure}[!h]
\centering
\psscalebox{1.0 1.0} 
{
\begin{pspicture}(0,-4.3999996)(3.31,-1.7634615)
\psdots[linecolor=black, dotsize=0.24](1.6640003,-3.0817308)
\psdots[linecolor=black, dotsize=0.24](0.46400025,-1.8817308)
\psdots[linecolor=black, dotsize=0.24](2.8640003,-1.8817308)
\psdots[linecolor=black, dotsize=0.24](2.8640003,-4.2817307)
\psdots[linecolor=black, dotsize=0.24](0.46400025,-4.2817307)
\psline[linecolor=black, linewidth=0.04](0.46400025,-4.2817307)(2.8640003,-1.8817308)
\psline[linecolor=black, linewidth=0.04](0.46400025,-1.8817308)(2.8640003,-4.2817307)
\psline[linecolor=black, linewidth=0.04](2.8640003,-1.8817308)(2.8640003,-4.2817307)(0.46400025,-4.2817307)
\psarc[linecolor=black, linewidth=0.04, dimen=outer](1.3640002,-2.781731){0.4}{325.14334}{311.9872}
\rput[bl](1.5680002,-3.6257308){$1$}
\rput[bl](0.0,-2.0737307){$0$}
\rput[br](3.3000002,-2.0737307){$2$}
\rput[tl](0.0,-4.073731){$4$}
\rput[t](3.2000003,-4.073731){$3$}
\end{pspicture}
}
\caption{The graph representing gcd-pairs in $\mathbb{Z}_5$}
\end{figure}
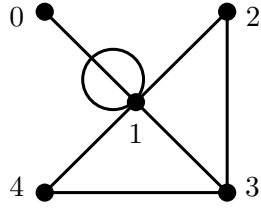

Let $G_n = (\mathbb{Z}_n, \nu_n)$ be a graph. We can see that the graph $G_n$ is more complicated for the larger number $n$. Some trivial properties about isomorphism, domination numbers, and connectivity of the graph $G_n$ are provided in the proposition below.

\begin{proposition}
Let $G_n = (\mathbb{Z}_n, \nu_n)$ be a graph. Then;
\begin{enumerate}
    \item $G_m$ is an isomorphic subgraph of $G_n$ whenever $m$ divides $n$.
    \item $G_n$ contains a maximal star subgraph of order $n$.
    \item The domination number $\gamma(G_n) = 1$.
    \item $G_n$ is connected.
\end{enumerate}
\end{proposition}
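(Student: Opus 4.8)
The plan is to observe that three of the four items are driven by a single fact: the vertex $1$ is universal in $G_n$. Since $1 \mid n$ for every $n$, Theorem \ref{ThmgcdpairPro} applied with $a = 1$ gives $\{1, b\} \in \nu_n$ for every $b \in \mathbb{Z}_n$, so $1$ is adjacent to every other vertex. This observation delivers items (2), (3), and (4) almost for free, so I would dispatch those first and treat (1) separately.

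For item (2), the edges $\{1, 0\}, \{1, 2\}, \ldots, \{1, n-1\}$ together with centre $1$ form a star $K_{1, n-1}$ on all $n$ vertices; being spanning, it is maximal and has order $n$. For item (3), the singleton $\{1\}$ is then a dominating set, so $\gamma(G_n) \le 1$, and since $G_n$ is non-empty $\gamma(G_n) \ge 1$, giving equality. For item (4), any two vertices $a$ and $b$ are joined by the path $a - 1 - b$, so $G_n$ is connected. Each of these is a one-line consequence of the universality of $1$.

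The one item demanding an actual construction is (1). Writing $n = md$ with $d = n/m$, I would define $\psi \colon \mathbb{Z}_m \to \mathbb{Z}_n$ by $\psi(a) = ad$, carrying the representatives $0 \le a < m$ to $0 \le ad < n$. This $\psi$ is injective on representatives, with image the vertex set $\{0, d, 2d, \ldots, (m-1)d\}$. The heart of the argument is the scaling identity $\gcd(ad, bd) = d\,\gcd(a,b)$, valid also when an argument is $0$ via $\gcd(0,x)=x$: from it, $\gcd(ad, bd) \mid n = md$ if and only if $\gcd(a,b) \mid m$, so $\{a,b\} \in \nu_m$ iff $\{\psi(a), \psi(b)\} \in \nu_n$. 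Hence $\psi$ induces a graph isomorphism between $G_m$ and the subgraph of $G_n$ induced on $\psi(\mathbb{Z}_m)$, proving (1).

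The only points requiring care, and thus the main (though modest) obstacle, are the boundary cases in (1): one must confirm the adjacency equivalence still holds for pairs involving the vertex $0$ and for the self-loops $\{a,a\}$, which are present exactly when $a$ divides the modulus. Both follow from the same scaling identity, since a loop at $a$ in $G_m$ exists iff $a \mid m$ iff $ad \mid n$ iff a loop sits at $\psi(a)$ in $G_n$, and the degenerate pair $\{0,0\}$ is an edge in neither graph. With these checks the correspondence is complete.
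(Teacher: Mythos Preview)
Your treatment of items (2)--(4) is essentially the paper's: the vertex $1$ is universal, so the star centred at $1$ gives the spanning star, the dominating set, and the connectedness at once.

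For item (1) you take a genuinely different route. The paper uses the identity embedding: since $m\le n$, the representatives $0,1,\dots,m-1$ sit inside $\mathbb{Z}_n$, and $\gcd(a,b)\mid m$ together with $m\mid n$ gives $\gcd(a,b)\mid n$, so every edge of $G_m$ is already an edge of $G_n$. This is a one-line argument yielding $G_m$ as a (not necessarily induced) subgraph of $G_n$. Your dilation map $\psi(a)=ad$ with $d=n/m$ costs a little more work---you need the scaling identity $\gcd(ad,bd)=d\gcd(a,b)$ and the boundary checks you mention---but it buys something extra: the equivalence $\gcd(a,b)\mid m \iff \gcd(ad,bd)\mid n$ shows that $G_m$ is isomorphic to the \emph{induced} subgraph of $G_n$ on $\{0,d,2d,\dots,(m-1)d\}$, a strictly stronger conclusion than the paper states or proves. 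Both arguments are correct for the claim as written.
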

\begin{proof}
    For the first item, if $\gcd(a,b) $ divides $ m$, then $\gcd(a,b) $ divides $ n$. Thus the edge $\{a,b\}$ in $G_m$ is also an edge in $G_n$. This implies that $G_m$ can be embedded in $G_n$. Next, we see that $\{1,a\}$ is an edge in $G_n$ for every $a\in \mathbb{Z}_n$. Therefore, a star graph with the internal vertex $1$ together with $n-1$ leaves is a subgraph of $G_n$. The last two items are consequences of the previous one.
\end{proof}

\begin{figure}[!h]
    \centering
    \begin{subfigure}[b]{0.45\textwidth}
    \centering
\psscalebox{1.0 1.0} 
{
\begin{pspicture}(0,-4.7508683)(3.4422693,-3.812593)
\rput[bl](1.622269,-4.474862){$1$}
\psdots[linecolor=black, dotsize=0.24](1.718269,-3.9308622)
\psdots[linecolor=black, dotsize=0.24](0.11826904,-3.9308622)
\psdots[linecolor=black, dotsize=0.24](3.318269,-3.9308622)
\rput[bl](0.022269042,-4.474862){$0$}
\rput[bl](3.222269,-4.474862){$2$}
\psline[linecolor=black, linewidth=0.04](0.11826904,-3.9308622)(3.318269,-3.9308622)
\psarc[linecolor=black, linewidth=0.04, dimen=outer](1.718269,-4.330862){0.4}{87.87891}{87.39744}
\end{pspicture}
}\vspace{1.5cm}
    \caption{$G_3$.}
    \end{subfigure}
    ~ 
    \begin{subfigure}[b]{0.45\textwidth}
    \centering
\psscalebox{1.0 1.0} 
{
\begin{pspicture}(0,-4.3999567)(4.84034,0.44046494)
\psarc[linecolor=black, linewidth=0.04, dimen=outer](3.2601764,-1.9397893){0.4}{47.16669}{37.07866}
\rput[bl](2.3241763,-3.3237894){$1$}
\rput[tl](0.3241764,-2.9237893){$0$}
\rput[t](4.3241763,-2.9237893){$2$}
\psline[linecolor=red, linewidth=0.04, linestyle=dashed, dash=0.17638889cm 0.10583334cm](0.82017636,-2.7797892)(4.0201764,-2.7797892)
\psline[linecolor=black, linewidth=0.04](2.4201765,-2.7797892)(2.4201765,-1.1797893)
\psline[linecolor=black, linewidth=0.04](1.2201763,-1.5797893)(2.4201765,-2.7797892)(3.6201763,-1.5797893)
\psdots[linecolor=black, dotsize=0.24](1.2801764,-1.6397892)
\psdots[linecolor=black, dotsize=0.24](2.4201765,-1.1797893)
\psdots[linecolor=black, dotsize=0.24](3.5601764,-1.6397892)
\psarc[linecolor=black, linewidth=0.04, dimen=outer](2.4201765,-2.7797892){1.6}{179.40318}{134.46336}
\rput[b](3.9561763,-1.7717893){$3$}
\rput[bl](2.3241763,-0.92378926){$4$}
\rput[bl](0.7561764,-1.7717893){$5$}
\psarc[linecolor=black, linewidth=0.04, dimen=outer](3.2201765,-1.9797893){1.2}{-41.992718}{-224.12488}
\psarc[linecolor=black, linewidth=0.04, dimen=outer](3.0201764,-1.3797892){1.8}{-54.17327}{-172.55327}
\psdots[linecolor=black, dotsize=0.24](0.82017636,-2.7797892)
\psdots[linecolor=black, dotsize=0.24](4.0201764,-2.7797892)
\psarc[linecolor=black, linewidth=0.04, dimen=outer](3.6201763,-2.7797892){0.4}{0.0}{357.70938}
\psarc[linecolor=red, linewidth=0.04, linestyle=dashed, dash=0.17638889cm 0.10583334cm, dimen=outer](2.4201765,-3.1797893){0.4}{90.0}{82.04599}
\psdots[linecolor=black, dotsize=0.24](2.4201765,-2.7797892)
\psarc[linecolor=black, linewidth=0.04, dimen=outer](1.8201764,-1.3797892){1.8}{-6.5632415}{-125.00865}
\psarc[linecolor=black, linewidth=0.04, dimen=outer](2.4201765,-1.5797893){1.2}{-2.5447211}{-178.99937}
\end{pspicture}
}
    \caption{$G_6$.}
    \end{subfigure}
    \caption{The graph $G_3$ embedded in the graph $G_6$.}
\end{figure}
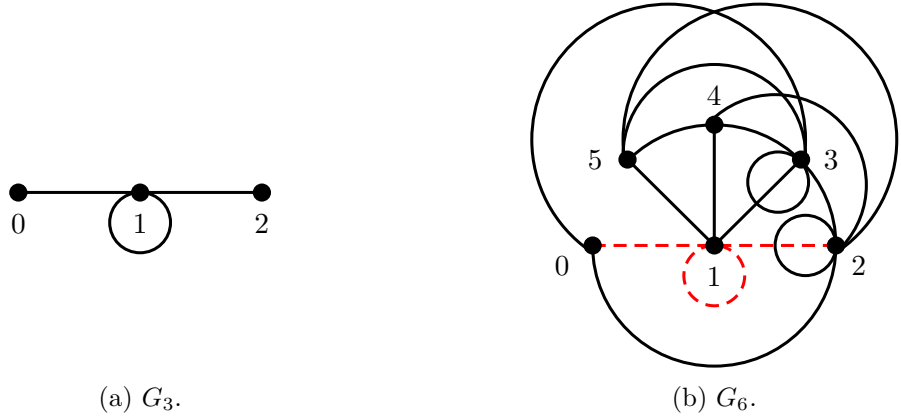

Next, we study paths and cycles in the graph $G_n$. A \emph{path} is a sequence of edges joining a sequence of distinct vertices of a graphs. In this paper we represent a path by $(v_1, v_2,\ldots, v_m)$, an $m$-tuple of different vertices. It is obvious that the edge joining $v_i$ and $v_{i+1}$ is $\{v_i, v_{i+1}\}$. A \emph{cycle} is a path that $v_1$ and $v_m$ are identical. A cycle with $3$ vertices will be called a \emph{triangle}. For any graph $G$, a path (resp. cycle) which visits all vertices of $G$ is called a \emph{Hamiltonian path (resp. cycle)}. A graph containing a Hamiltonian path (resp. cycle) is called a \emph{traceable (resp. Hamiltonian)} graph. A path or a cycle in a graph $G$ is said to be \emph{maximal} if the path or the cycle reach the possible maximum numbers of vertices.

\begin{proposition}
Let $G_n = (\mathbb{Z}_n, \nu_n)$ be a graph. Then;
\begin{enumerate}
    \item $G_n$ contains triangles if and only if $n\geq 4$.
    \item $G_n$ is a traceable graph.
    \item $G_n$ is a Hamiltonian graph if $n$ is even number greater than $2$. Otherwise, $G_n$ contains a maximal cycle of order $n-1$. 
\end{enumerate}
\end{proposition}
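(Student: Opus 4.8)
The plan is to reduce everything to three elementary adjacency facts and then isolate a single counting argument as the only real work. The facts are: (a) the vertex $1$ is adjacent to every other vertex, since $\gcd(1,b)=1\mid n$, a special case of Theorem~\ref{ThmgcdpairPro}; (b) consecutive residues are always adjacent, since $\gcd(k,k+1)=1\mid n$; and (c) the vertex $0$ is adjacent to $b$ precisely when $b\mid n$, because $\gcd(0,b)=b$. I treat adjacency only between distinct vertices, so the loops in $\nu_n$ are irrelevant. For item~(1), I would check that $\{1,2,3\}$ is a triangle whenever $n\ge 4$: these residues are distinct and pairwise coprime, hence pairwise adjacent. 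For the converse, a triangle needs three vertices so $n\ge 3$ is forced, and $n=3$ fails by inspection---the only edges of $G_3$ are $\{0,1\}$ and $\{1,2\}$ (note $\gcd(0,2)=2\nmid 3$), so $G_3$ is a path. For item~(2), I would simply verify that $(0,1,2,\dots,n-1)$ is a Hamiltonian path: the first edge exists by (a)/(c) and every later edge $\{k,k+1\}$ by (b).

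Item~(3) splits into a construction and an impossibility. When $n$ is even and $n>2$, I would reroute the path through $0$ using $2\mid n$: the cyclic sequence $(0,2,3,4,\dots,n-1,1)$ is a Hamiltonian cycle, since $\{0,2\}$ is an edge by (c), the block $2,3,\dots,n-1$ is a run of consecutive residues, and the closing edges $\{n-1,1\}$ and $\{1,0\}$ hold by coprimality. When $n$ is odd, I would first exhibit the cycle $(1,2,\dots,n-1)$ closed by $\{n-1,1\}$; this has order $n-1$ and omits only the vertex $0$.

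The hard part will be showing that for odd $n$ no Hamiltonian (order-$n$) cycle exists, so that the order-$(n-1)$ cycle above is genuinely maximal; I would do this by a parity/degree count. Partition $\{1,\dots,n-1\}$ into the even residues $E$ and the odd residues $O$, so that $|E|=|O|=(n-1)/2$ since $n$ is odd. Because $n$ is odd, the gcd of two even numbers is even and cannot divide $n$, so $E$ is an independent set; similarly no even residue is adjacent to $0$ (again $\gcd(0,e)=e$ is even); and every neighbour of $0$, being a divisor of $n$, is odd and hence lies in $O$. Now suppose a Hamiltonian cycle $C$ exists. Every edge of $C$ meeting $E$ has its other endpoint in $O$, so the number of such edges is $\sum_{a\in E}\deg_C(a)=2|E|=n-1$, and these use $n-1$ of the edge-endpoints at $O$. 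But $C$ provides only $2|O|=n-1$ edge-endpoints at $O$ in total, so $O$ is fully saturated by edges to $E$, leaving no endpoint for either of the two $C$-edges that $0$ must send into $O$---a contradiction. This forbids an order-$n$ cycle, so the order-$(n-1)$ cycle is maximal. I expect this counting step to be the crux; the small cases $n\le 3$ are degenerate and are checked directly.
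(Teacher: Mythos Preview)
Your proposal is correct and follows essentially the same approach as the paper: the same triangle $\{1,2,3\}$ (the paper uses the family $(1,a,a+1)$), the same Hamiltonian path $(0,1,\dots,n-1)$, the same Hamiltonian cycle $(0,2,3,\dots,n-1,1)$ for even $n$, and the same $(n-1)$-cycle $(1,2,\dots,n-1)$ for odd $n$. The only stylistic difference is in ruling out a Hamiltonian cycle for odd $n$: the paper argues that the independence of the even residues forces strict parity alternation along the cycle and then reads off a contradiction at the vertex preceding $0$, whereas you reach the same contradiction via an edge-endpoint count between $E$, $O$, and $\{0\}$; both are routine consequences of the fact that $E\cup\{0\}$ is an independent set of size $(n+1)/2$.
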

\begin{proof}
    We can draw the graphs $G_2$ and $G_3$, so we can see that there is no any cycle contained in the graphs. Now we suppose that $n\geq 4$. We see that $(1, a, a+1, 1)$ is a triangle in $G_n$ for every $a= 2,3,\ldots,n-2$. This show that $G_n$ contains triangles precisely for $n\geq 4$. Moreover, we know that $\gcd(a,a+1) = 1$ for every $a= 0, 1, \ldots, n-2$. Thus $(0, 1, 2, \ldots, n-1)$ is a Hamiltonian path in $G_n$, so the graph $G_n$ is a traceable graph.
    
    For the last one, we assume that $n$ is an even number greater than $2$. Then $(0, 2, 3,\ldots, n-1, 1, 0)$ is a Hamiltonian cycle in $G_n$. Thus $G_n$ is Hamiltonian. Next, we let $n$ be odd. Then all even vertices are not adjacent to each others. Suppose that $(0, h_1, h_2,\ldots, h_{n-1}, 0)$ is a Hamiltonian cycle in $G_n$. Then vertices in the cycle are odd and even alternately. If we consider the parities of vertices of the cycle from left to right, we will obtain that $h_1$ is odd and $h_{n-1}$ will be even. This is impossible since $h_{n-1}$ is adjacent to $0$ which is also even. Therefore, $G_n$ does not contain a Hamiltonian cycle. However, $(1, 2, \ldots, n-1, 1)$ is a cycle of order $n-1$ in $G_n$. Hence $G_n$ contains a maximal cycle of order $n-1$.
\end{proof}

\begin{figure}[!h]
    \centering
    \begin{subfigure}[b]{0.45\textwidth}
    \centering
\psscalebox{1.0 1.0} 
{
\begin{pspicture}(0,-4.796927)(5.3162513,-0.7627806)
\psarc[linecolor=red, linewidth=0.04, dimen=outer](2.0959997,-3.182927){1.6}{318.0128}{136.2364}
\psdots[linecolor=black, dotsize=0.2](3.6959999,-3.182927)
\psdots[linecolor=black, dotsize=0.24](0.95599973,-2.0429268)
\psdots[linecolor=black, dotsize=0.24](3.2359998,-2.0429268)
\psdots[linecolor=black, dotsize=0.24](2.0959997,-1.5829268)
\rput[t](3.9999998,-3.3269267){$3$}
\rput[bl](1.9999998,-3.7269268){$1$}
\rput[tl](0.0,-3.3269267){$0$}
\rput[b](3.6319997,-2.1749268){$4$}
\rput[bl](1.9999998,-1.3269268){$5$}
\rput[bl](0.43199974,-2.1749268){$6$}
\rput[t](3.1999998,-4.526927){$2$}
\psline[linecolor=black, linewidth=0.04](0.49599975,-3.182927)(3.6959999,-3.182927)
\psline[linecolor=black, linewidth=0.04](0.8959997,-1.9829268)(3.2959998,-4.382927)
\psline[linecolor=black, linewidth=0.04](2.0959997,-1.5829268)(2.0959997,-3.182927)(3.2959998,-1.9829268)
\psarc[linecolor=black, linewidth=0.04, dimen=outer](2.0959997,-3.5829268){0.4}{94.2364}{86.63354}
\psarc[linecolor=black, linewidth=0.04, dimen=outer](3.4959998,-2.5829268){1.8}{262.20142}{143.17828}
\psarc[linecolor=black, linewidth=0.04, dimen=outer](2.8959997,-2.3829267){1.2}{-41.992718}{-224.12488}
\psline[linecolor=red, linewidth=0.04](0.49599975,-3.182927)(2.0959997,-3.182927)(3.2959998,-4.382927)
\psdots[linecolor=black, dotsize=0.2](3.2359998,-4.322927)
\psdots[linecolor=black, dotsize=0.2](0.49599975,-3.182927)
\psdots[linecolor=black, dotsize=0.2](2.0959997,-3.182927)
\end{pspicture}
}\vspace{0.4cm}
    \caption{A Hamiltonian path in $G_7$.}
    \end{subfigure}
    ~ 
    \begin{subfigure}[b]{0.45\textwidth}
    \centering
\psscalebox{1.0 1.0} 
{
\begin{pspicture}(0,-4.3999567)(4.84034,0.44046494)
\psarc[linecolor=black, linewidth=0.04, dimen=outer](3.2601764,-1.9397893){0.4}{47.16669}{37.07866}
\rput[bl](2.3241763,-3.3237894){$1$}
\rput[tl](0.3241764,-2.9237893){$0$}
\rput[t](4.3241763,-2.9237893){$2$}
\psline[linecolor=black, linewidth=0.04](0.82017636,-2.7797892)(4.0201764,-2.7797892)
\psline[linecolor=black, linewidth=0.04](2.4201765,-2.7797892)(2.4201765,-1.1797893)
\psline[linecolor=black, linewidth=0.04](1.2201763,-1.5797893)(2.4201765,-2.7797892)(3.6201763,-1.5797893)
\psarc[linecolor=red, linewidth=0.04, dimen=outer](2.4201765,-2.7797892){1.6}{179.40318}{134.46336}
\rput[b](3.9561763,-1.7717893){$3$}
\rput[bl](2.3241763,-0.92378926){$4$}
\rput[bl](0.7561764,-1.7717893){$5$}
\psarc[linecolor=black, linewidth=0.04, dimen=outer](3.2201765,-1.9797893){1.2}{-41.992718}{-224.12488}
\psarc[linecolor=black, linewidth=0.04, dimen=outer](3.0201764,-1.3797892){1.8}{-54.17327}{-172.55327}
\psdots[linecolor=black, dotsize=0.24](4.0201764,-2.7797892)
\psarc[linecolor=black, linewidth=0.04, dimen=outer](3.6201763,-2.7797892){0.4}{0.0}{357.70938}
\psarc[linecolor=black, linewidth=0.04, dimen=outer](2.4201765,-3.1797893){0.4}{90.0}{82.04599}
\psarc[linecolor=black, linewidth=0.04, dimen=outer](1.8201764,-1.3797892){1.8}{-6.5632415}{-125.00865}
\psarc[linecolor=black, linewidth=0.04, dimen=outer](2.4201765,-1.5797893){1.2}{-2.5447211}{-178.99937}
\psdots[linecolor=black, dotsize=0.24](3.5601764,-1.6397892)
\psdots[linecolor=black, dotsize=0.24](2.4201765,-1.1797893)
\psline[linecolor=red, linewidth=0.04](1.2201763,-1.5797893)(2.4201765,-2.7797892)(0.82017636,-2.7797892)
\psdots[linecolor=black, dotsize=0.24](1.2801764,-1.6397892)
\psdots[linecolor=black, dotsize=0.24](0.82017636,-2.7797892)
\psdots[linecolor=black, dotsize=0.24](2.4201765,-2.7797892)
\end{pspicture}
}
    \caption{A Hamiltonian cycle in $G_6$.}
    \end{subfigure}
    \caption{Traceable and Hamiltonian graphs.}
\end{figure}
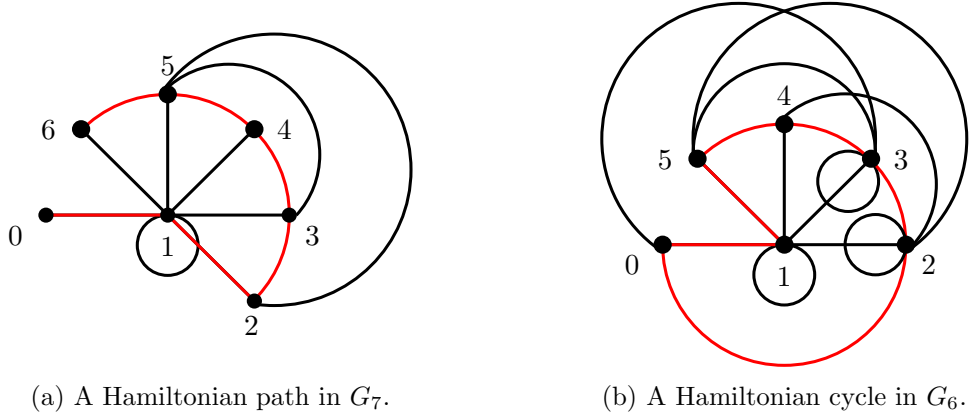

Planarity of graphs and graph coloring are also examined in this section. First, we show that the graph $G_n$ contains a complete graph of appropriate order and extend the results to describe the planarity of graphs and graph coloring.

\begin{theorem}\label{ThmContainMK2}
Let $p$ and $q$ be prime such that $p < q$, and $m$ be the number of prime numbers which are less than $pq$ and different from $p$ and $q$. Moreover, suppose that $k$ is the largest positive number such that $p^{k} < pq$. Then $G_{pq}$ contains a maximal complete subgraph of order $m+k+2$.
\end{theorem}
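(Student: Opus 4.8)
The plan is to exhibit one explicit set of vertices, prove that it induces a complete subgraph, and then show that it cannot be enlarged, so that it is maximal. The natural candidate is
\[ S = \{1\} \cup \{\, r : r \text{ is prime and } r < pq \,\} \cup \{p^2, p^3, \ldots, p^k\}. \]
Counting $S$ is the easy part: the unit $1$ gives one vertex, the primes below $pq$ give $m+2$ vertices (namely $p$, $q$, and the $m$ other primes), and the higher powers $p^2,\ldots,p^k$ give $k-1$ vertices, so $|S| = 1+(m+2)+(k-1) = m+k+2$. I would therefore fix $S$ first, record its cardinality, and reduce the theorem to the two substantive claims that $S$ is a clique and that no vertex outside $S$ is adjacent to all of $S$.

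For completeness I would run through the unordered pairs $\{x,y\}\subseteq S$ by type and check $\gcd(x,y)\mid pq$ in each. Pairs containing $1$ have gcd $1$, and two distinct primes are coprime, so these divide $pq$ at once. Every pair of the form $\{p, y\}$ is an edge directly from Theorem \ref{ThmgcdpairPro}, since $p\mid pq$; in particular $\{p, p^j\}$ is fine. A pair $\{r, p^j\}$ with $r$ a prime different from $p$ is coprime, hence an edge. The one remaining type is a pair of two genuine higher powers $\{p^i, p^j\}$ with $2\le i<j\le k$, whose gcd is $p^i$, and I expect this to be the main obstacle: here Theorem \ref{ThmgcdpairPro} does not apply because $p^i\nmid pq$ for $i\ge 2$, so the divisibility $\gcd(p^i,p^j)\mid pq$ must be established separately and is exactly the delicate point on which the completeness of $S$ --- and hence the stated order $m+k+2$ --- turns. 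I would isolate this case and pin down precisely which hypothesis on $p$, $q$, and $k$ makes it go through.

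For maximality I would take an arbitrary vertex $v\in\mathbb{Z}_{pq}\setminus S$ and produce some $s\in S$ with $\{v,s\}\notin\nu_{pq}$. The vertex $0$ is non-adjacent to $p^2\in S$ because $\gcd(0,p^2)=p^2\nmid pq$ (note $p^2<pq$, so $k\ge 2$ and $p^2\in S$). If $v$ is nonzero and has a prime factor $\ell\notin\{p,q\}$, then $\ell<pq$ gives $\ell\in S$ while $\gcd(v,\ell)=\ell\nmid pq$, so $v$ fails. The only remaining nonzero candidates have all their prime factors in $\{p,q\}$; since $v<pq$ forces $v=p^a$ with $a\le k$ (a power of $q$ at least $q^2$ would exceed $pq$, and a mixed product $p^aq^b$ with $a,b\ge 1$ is at least $pq$), such a $v$ is either a prime or one of $p^2,\ldots,p^k$, hence already in $S$. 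This exhausts $\mathbb{Z}_{pq}\setminus S$ and shows $S$ is maximal, completing the argument once the power--power case of completeness is settled.
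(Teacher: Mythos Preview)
Your set $S$ and the two-step strategy (show $S$ is a clique, then show it cannot be enlarged) are exactly what the paper does; your maximality argument is in fact more explicit than the paper's, which simply asserts that any $a\notin S$ is divisible by some $p_j$ without separately treating $0$ or the $\{p,q\}$-smooth integers.

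The ``delicate point'' you isolated, however, is a genuine obstruction that the paper glosses over and that cannot be resolved under the stated hypotheses. For $2\le i<j\le k$ one has $\gcd(p^i,p^j)=p^i$, and since $p\ne q$ the condition $p^i\mid pq$ forces $i=1$. Hence whenever $k\ge 3$ (equivalently $q>p^2$) the pair $\{p^2,p^3\}$ is not an edge of $G_{pq}$ and $S$ is not a clique. A concrete instance is $p=2$, $q=5$: here $k=3$ and $m=2$, so the theorem predicts a maximal clique of order $7$ in $G_{10}$, but $\gcd(4,8)=4\nmid 10$, so $\{4,8\}\notin\nu_{10}$ and the proposed set $\{1,2,3,4,5,7,8\}$ is not complete; one checks that no clique of size $7$ exists in $G_{10}$ at all. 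The paper's proof simply asserts that ``the greatest common divisor of every pair \ldots\ divides $pq$'' without verifying this case, so the gap you flagged is present in the original argument as well, and the statement as written fails whenever $q>p^2$.
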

\begin{proof}
    Suppose that $p_1, p_2,\ldots,p_{m}, p, q$ are all distinct prime numbers less than $pq$. Then the greatest common divisor of every pair of vertices $1, p_1, p_2,\ldots,p_m, p, p^2,\ldots, p^{k}, q$ divides $pq$, so they are adjacent to each others. Thus these vertices form a complete subgraph $K_{m+k+2}$ of $G_{pq}$. Let $a\in \mathbb{Z}_{pq}$ such that $a\not\in \{1, p_1, p_2,\ldots,p_m, p, p^2,\ldots, p^{k}, q\}$. This implies that there is $p_j$ dividing $a$ for some $j= 1,2,\ldots,m$. Thus $\gcd(a,p_j) = p_j \nmid pq$, so $\{a, p_j\}\not\in \nu_{pq}$. This implies that $G_{pq}$ contains a maximal complete subgraph of order $m+k+2$.
\end{proof}

\begin{theorem}\label{ThmContainMK}
Let $k$ be a positive integer, $p$ be prime, and $m$ be the number of prime numbers which are less than $p^k$ and different from $p$. If $p= 2, k= 1$, then $G_{p^k}$ contains a maximal complete subgraph of order $k+1$. Otherwise, $G_{p^k}$ contains a maximal complete subgraph of order $m+k$.
\end{theorem}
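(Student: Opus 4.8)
The plan is to mimic the structure of the proof of Theorem \ref{ThmContainMK2}: exhibit an explicit clique of the claimed order and then argue that no further vertex can be adjoined. First I would record the basic reduction. The positive divisors of $p^k$ are exactly $1, p, p^2, \ldots, p^k$, so for $a,b \in \mathbb{Z}_{p^k}$ we have $\{a,b\} \in \nu_{p^k}$ if and only if $\gcd(a,b)$ is a power of $p$ (including $p^0 = 1$). This turns every adjacency check below into the single question of whether a greatest common divisor is a power of $p$.

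Next, assuming we are not in the case $p = 2,\, k = 1$, I would propose the vertex set $C = \{1, p, p^2, \ldots, p^{k-1}\} \cup \{q_1, \ldots, q_m\}$, where $q_1, \ldots, q_m$ are all the primes less than $p^k$ that are different from $p$; its cardinality is $k + m$. To see that $C$ spans a complete subgraph I would verify the three kinds of pairs: $\gcd(p^i, p^j) = p^{\min(i,j)}$ with $\min(i,j) \le k-1 < k$, hence a power of $p$ dividing $p^k$; $\gcd(p^i, q_\ell) = 1$ since $q_\ell \ne p$; and $\gcd(q_\ell, q_{\ell'}) = 1$ for distinct primes. I would emphasize that the pairwise adjacency of the powers of $p$ works here precisely because the modulus is a prime power, in contrast with the $pq$ situation of Theorem \ref{ThmContainMK2}.

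Then I would prove maximality. Let $a \in \mathbb{Z}_{p^k} \setminus C$. If $a \ne 0$ and $a$ is not a power of $p$, then $a$ has a prime factor $q \ne p$ with $q \le a < p^k$, so $q$ is one of the $q_\ell$, and $\gcd(a, q) = q$ is not a power of $p$; hence $a$ is not adjacent to $q \in C$. If $a = 0$, then $\gcd(0, q_\ell) = q_\ell$ again blocks adjacency, and here is where I use $m \ge 1$. Since every power of $p$ below $p^k$ already lies in $C$, each vertex outside $C$ fails to join it, so $C$ is a maximal complete subgraph of order $m + k$.

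Finally, the exceptional case $p = 2,\, k = 1$ is exactly the situation where $m = 0$: there is no prime below $p^k = 2$ at all, so the obstruction to adjoining $0$ vanishes. Here $\mathbb{Z}_2 = \{0,1\}$ and $\{0,1\}$ is a clique (since $\gcd(0,1) = 1$) of order $2 = k+1$, which is the entire graph and hence trivially maximal. The step I expect to be the main obstacle is getting this dichotomy right: one must observe that $0$ is adjacent only to the powers of $p$, so it can be added to the clique if and only if no other prime is available, and then check that $m = 0$ occurs only for $p = 2,\, k = 1$ (for every other prime power $p^k \ge 3$ the prime $2$ or $3$ supplies a blocking vertex distinct from $p$).
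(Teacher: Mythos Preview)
Your proposal is correct and follows essentially the same approach as the paper: the same clique $\{1,p,\ldots,p^{k-1},q_1,\ldots,q_m\}$ is exhibited, and maximality is shown by producing for each outside vertex $a$ some prime $q_j\neq p$ with $q_j\mid a$, so that $\gcd(a,q_j)=q_j\nmid p^k$. Your treatment is in fact slightly more careful than the paper's, since you separate out the case $a=0$ and explain precisely why $m\ge 1$ holds in every situation other than $p=2$, $k=1$, which is exactly what forces the dichotomy in the statement.
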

\begin{proof}
    The case $p= 2, k= 1$ is obvious. For the rest we suppose that $p_1, p_2,\ldots,p_{m}, p$ are all distinct prime numbers less than $p^k$. Then the greatest common divisor of every pair of vertices $1, p_1, p_2,\ldots,p_m, p, p^2,\ldots, p^{k-1}$ divides $p^k$, so they are adjacent to each others. Thus these vertices form a complete subgraph $K_{m+k}$ of $G_{p^k}$. Let $a\in \mathbb{Z}_{p^k}$ such that $a\not\in \{1, p_1,\ldots, p_{m}, p, p^2,\ldots, p^{k-1}\} $. This implies that there is $p_j$ dividing $a$ for some $j= 1,2,\ldots,m$. Thus $\gcd(a,p_j) = p_j \nmid p^k$, so $\{a, p_j\}\not\in \nu_p$. This implies that $G_{p^k}$ contains a maximal complete subgraph of order $m+k$.
\end{proof}

\begin{corollary}\label{CorContainM1}
    $G_n$ contains a complete subgraph of order $m+1$ where $m$ is the number of prime numbers less than $n$.
\end{corollary}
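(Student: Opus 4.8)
The plan is to exhibit an explicit set of $m+1$ vertices that are pairwise adjacent in $G_n$. The natural candidate, mirroring the clique constructions used in Theorems \ref{ThmContainMK2} and \ref{ThmContainMK}, is the vertex $1$ together with every prime number less than $n$. Writing $p_1, p_2, \ldots, p_m$ for the primes less than $n$, I would consider the vertex set $V = \{1, p_1, p_2, \ldots, p_m\}$. Since $1$ is not prime, these $m+1$ elements are distinct, and all of them lie in $\mathbb{Z}_n = \{0, 1, \ldots, n-1\}$; hence $V$ is a genuine collection of $m+1$ vertices of $G_n$.

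First I would verify that $1$ is adjacent to each prime. Because $1\mid n$, Theorem \ref{ThmgcdpairPro} immediately yields $\{1, p_i\}\in \nu_n$ for every $i$, so $1$ is joined to all of $p_1,\ldots,p_m$. Next I would handle adjacency between two distinct primes $p_i$ and $p_j$. As distinct primes are coprime we have $\gcd(p_i,p_j)=1$, and since $1\mid n$ this gives $\{p_i,p_j\}\in \nu_n$. Thus every pair of vertices in $V$ is joined by an edge, so $V$ induces a complete subgraph $K_{m+1}$ of $G_n$, which is exactly the assertion.

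I expect no substantive obstacle: the statement is a direct specialization of the clique-building idea already established for $n=pq$ and $n=p^k$, now applied to an arbitrary $n$ using only the two elementary facts that $1$ divides $n$ and that distinct primes are relatively prime. The single point meriting a moment's care is the bookkeeping, namely confirming that $1$ and the $m$ primes are genuinely distinct and all present as vertices of $G_n$, but this is immediate. Note also that, unlike Theorems \ref{ThmContainMK2} and \ref{ThmContainMK}, no maximality is claimed here, so I would not need to argue that the clique cannot be enlarged.
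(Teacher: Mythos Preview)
Your proposal is correct and matches the paper's intended approach: the paper states this corollary without proof, as the clique $\{1, p_1, \ldots, p_m\}$ is precisely the common ingredient in the explicit constructions of Theorems~\ref{ThmContainMK2} and~\ref{ThmContainMK}, now applied to an arbitrary $n$. Your observation that no maximality is claimed here, and hence no further argument is needed, is exactly right.
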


\begin{figure}[!h]
    \centering
    \begin{subfigure}[b]{0.45\textwidth}
    \centering
\psscalebox{1.0 1.0} 
{
\begin{pspicture}(0,-4.7969275)(5.3162513,-0.7627815)
\psarc[linecolor=black, linewidth=0.04, dimen=outer](2.0959997,-3.1829271){1.6}{-41.987213}{-223.7636}
\psdots[linecolor=black, dotsize=0.24](0.95599973,-2.042927)
\psdots[linecolor=black, dotsize=0.24](3.2359998,-2.042927)
\rput[t](3.9999998,-3.3269272){$3$}
\rput[bl](1.9999998,-3.726927){$1$}
\rput[tl](0.0,-3.3269272){$0$}
\rput[b](3.6319997,-2.174927){$4$}
\rput[bl](1.9999998,-1.3269271){$5$}
\rput[bl](0.43199974,-2.174927){$6$}
\rput[t](3.1999998,-4.526927){$2$}
\psline[linecolor=black, linewidth=0.04](0.49599975,-3.1829271)(3.6959999,-3.1829271)
\psline[linecolor=black, linewidth=0.04](0.8959997,-1.9829271)(3.2959998,-4.382927)
\psline[linecolor=black, linewidth=0.04](2.0959997,-1.5829271)(2.0959997,-3.1829271)(3.2959998,-1.9829271)
\psarc[linecolor=black, linewidth=0.04, dimen=outer](2.0959997,-3.582927){0.4}{94.2364}{86.63354}
\psarc[linecolor=red, linewidth=0.04, dimen=outer](3.4959998,-2.582927){1.8}{-97.7986}{-216.82172}
\psarc[linecolor=red, linewidth=0.04, dimen=outer](2.8959997,-2.3829272){1.2}{-41.992718}{-224.12488}
\psdots[linecolor=black, dotsize=0.2](0.49599975,-3.1829271)
\psline[linecolor=red, linewidth=0.04](3.2959998,-4.382927)(2.0959997,-3.1829271)(2.0959997,-1.5829271)
\psarc[linecolor=red, linewidth=0.04, dimen=outer](2.0959997,-3.1829271){1.6}{315.0}{0.33532384}
\psline[linecolor=red, linewidth=0.04](2.0959997,-3.1829271)(3.6959999,-3.1829271)
\psdots[linecolor=black, dotsize=0.2](3.2359998,-4.322927)
\psdots[linecolor=black, dotsize=0.2](3.6959999,-3.1829271)
\psdots[linecolor=black, dotsize=0.24](2.0959997,-1.5829271)
\psdots[linecolor=black, dotsize=0.2](2.0959997,-3.1829271)
\end{pspicture}
}\vspace{0.4cm}
    \caption{A maximal complete subgraph $K_4$ of $G_7$.}
    \end{subfigure}
    ~ 
    \begin{subfigure}[b]{0.45\textwidth}
    \centering
\psscalebox{1.0 1.0} 
{
\begin{pspicture}(0,-4.3999567)(4.84034,0.44046494)
\psarc[linecolor=black, linewidth=0.04, dimen=outer](3.2601764,-1.9397893){0.4}{47.16669}{37.07866}
\rput[bl](2.3241763,-3.3237894){$1$}
\rput[tl](0.3241764,-2.9237893){$0$}
\rput[t](4.3241763,-2.9237893){$2$}
\psline[linecolor=black, linewidth=0.04](0.82017636,-2.7797892)(4.0201764,-2.7797892)
\psline[linecolor=red, linewidth=0.04](2.4201765,-2.7797892)(2.4201765,-1.1797893)
\psline[linecolor=red, linewidth=0.04](1.2201763,-1.5797893)(2.4201765,-2.7797892)(3.6201763,-1.5797893)
\psarc[linecolor=black, linewidth=0.04, dimen=outer](2.4201765,-2.7797892){1.6}{179.40318}{134.46336}
\rput[b](3.9561763,-1.7717893){$3$}
\rput[bl](2.3241763,-0.92378926){$4$}
\rput[bl](0.7561764,-1.7717893){$5$}
\psarc[linecolor=red, linewidth=0.04, dimen=outer](3.2201765,-1.9797893){1.2}{-41.992718}{-224.12488}
\psarc[linecolor=red, linewidth=0.04, dimen=outer](3.0201764,-1.3797892){1.8}{-54.17327}{-172.55327}
\psarc[linecolor=black, linewidth=0.04, dimen=outer](3.6201763,-2.7797892){0.4}{0.0}{357.70938}
\psarc[linecolor=black, linewidth=0.04, dimen=outer](2.4201765,-3.1797893){0.4}{90.0}{82.04599}
\psarc[linecolor=black, linewidth=0.04, dimen=outer](1.8201764,-1.3797892){1.8}{-6.5632415}{-125.00865}
\psarc[linecolor=red, linewidth=0.04, dimen=outer](2.4201765,-1.5797893){1.2}{-2.5447211}{-178.99937}
\psdots[linecolor=black, dotsize=0.24](0.82017636,-2.7797892)
\psline[linecolor=red, linewidth=0.04](1.2201763,-1.5797893)(2.4201765,-2.7797892)(4.0201764,-2.7797892)
\psarc[linecolor=red, linewidth=0.04, dimen=outer](2.4201765,-2.7797892){1.6}{0.0}{136.04163}
\psdots[linecolor=black, dotsize=0.24](4.0201764,-2.7797892)
\psdots[linecolor=black, dotsize=0.24](2.4201765,-2.7797892)
\psdots[linecolor=black, dotsize=0.24](3.5601764,-1.6397892)
\psdots[linecolor=black, dotsize=0.24](2.4201765,-1.1797893)
\psdots[linecolor=black, dotsize=0.24](1.2801764,-1.6397892)
\end{pspicture}
}
    \caption{A maximal complete subgraph $K_5$ of $G_6$.}
    \end{subfigure}
    \caption{Maximal complete subgraphs of $G_n$.}
\end{figure}
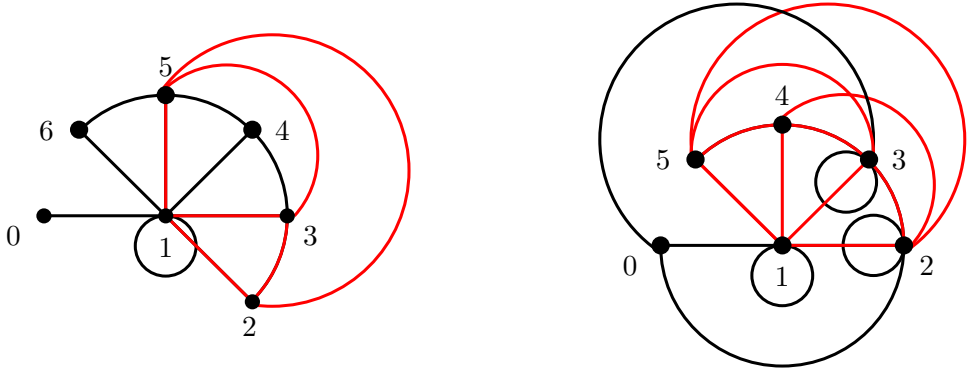

\begin{theorem}\label{ThmContainK5}
The graph $G_n$ contains a complete graph $K_5$ precisely for $n \geq 6$ and $n\neq 7$.
\end{theorem}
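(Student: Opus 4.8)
The plan is to split the biconditional into an existence half ($K_5\subseteq G_n$ for $n=6$ and $n\ge 8$) and a non-existence half ($G_n$ has no $K_5$ for $n\le 5$ and $n=7$), and to lean on the clique-containment results already proved, isolating $n=7$ as the one genuinely combinatorial case. For the existence direction with $n\ge 8$ I would simply invoke Corollary \ref{CorContainM1}: it produces a complete subgraph of order $m+1$, where $m$ is the number of primes below $n$. Since the four smallest primes $2,3,5,7$ all lie below $8$, we have $m\ge 4$ for every $n\ge 8$, hence $K_5\subseteq K_{m+1}\subseteq G_n$. The only remaining value in the existence range is $n=6$, which I would handle via Theorem \ref{ThmContainMK2} with $p=2,\ q=3$: the only prime below $6$ different from $2,3$ is $5$, so $m=1$, and $2^2=4<6<2^3$ gives $k=2$, yielding a maximal complete subgraph of order $m+k+2=5$. (Equivalently, one checks directly that the pairwise gcds of $\{1,2,3,4,5\}$ all divide $6$, so these five vertices form a $K_5$ in $G_6$.)

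For the non-existence direction the cases $n\le 5$ are immediate: for $n\le 4$ there are fewer than five vertices, and for $n=5$ the graph is not complete because $\gcd(2,4)=2\nmid 5$, so $\{2,4\}$ is a non-edge and no $K_5$ can occur on the five available vertices. This leaves $n=7$ as the crux. Since $7$ is prime, for any $a,b\in\{1,\dots,6\}$ the edge condition $\gcd(a,b)\mid 7$ forces $\gcd(a,b)=1$; moreover $0$ is adjacent only to $1$, because $\gcd(0,b)=b\mid 7$ with $0\le b<7$ requires $b=1$. Hence $0$ has degree $1$ and cannot belong to any $K_5$ (a vertex of a $K_5$ needs four neighbours within the clique), so any would-be $K_5$ must be a set of five pairwise coprime elements of $\{1,\dots,6\}$.

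I would then show no such set exists: the even numbers $2,4,6$ are pairwise non-coprime, so at most one of them can appear in a pairwise-coprime subset of $\{1,\dots,6\}$; together with the remaining elements $1,3,5$ this caps any pairwise-coprime subset at four elements (e.g. $\{1,2,3,5\}$). Therefore $G_7$ contains no $K_5$, completing the characterization.

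I expect the $n=7$ case to be the main obstacle, since it is precisely the exception to the otherwise monotone pattern and the only value requiring a direct argument rather than a citation of an earlier theorem. The key idea that drives it is the observation that primality of $7$ collapses adjacency to coprimality, after which the parity restriction on $2,4,6$ forces the clique bound down to four.
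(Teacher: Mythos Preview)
Your proof is correct and follows essentially the same approach as the paper: the same explicit cliques $\{1,2,3,4,5\}$ for $n=6$ and $\{1,2,3,5,7\}$ for $n\ge 8$ (yours obtained via the earlier clique theorems rather than written out directly), and the same case-by-case inspection for $n\le 5$ and $n=7$. Your treatment of $n=7$ is in fact more explicit than the paper's, which simply asserts that one can verify the absence of a $K_5$ by examining $\nu_7$; your parity argument on $\{2,4,6\}$ is exactly the kind of check that justifies that assertion.
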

\begin{proof}
    We can see that the graph $G_n$ does not have a complete subgraph $K_5$ for every $n \leq 7$ and $n\neq 6$ by investigating the set $\nu_n$. For $n=6$, we see that the vertices $1, 2, 3, 4$ and $5$ are adjacent to each others forming a complete subgraph $K_5$ of $G_6$. The same technique is applied to the case $n\geq 8$ by considering the vertices $1, 2, 3, 5$ and $7$ instead. Consequently, the graph $G_n$ contains a complete graph $K_5$ precisely for $n \geq 6$ and $n\neq 7$.
\end{proof}

A graph $G$ is said to be \emph{planar} if $G$ can be drawn on the plane such that there are no any edges intersect each others only at their endpoints. Since $K_5$ is not planar, Theorem \ref{ThmContainK5} implies that the graph $G_n$ is not planar for every $n \geq 6$ and $n\neq 7$. For the case $n \leq 7$ and $n\neq 6$, we can see that the graph $G_n$ is planar by considering the set $\nu_n$. This result of planarity is concluded in the following corollary.

\begin{corollary}
The graph $G_n$ is planar precisely for $n \leq 7$ and $n\neq 6$.    
\end{corollary}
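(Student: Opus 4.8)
The plan is to prove the two implications separately, leaning on Theorem~\ref{ThmContainK5} for necessity and on a direct verification of a short finite list for sufficiency.

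For necessity I would argue by contrapositive. Suppose $n \geq 6$ and $n \neq 7$. Theorem~\ref{ThmContainK5} then guarantees that $G_n$ contains $K_5$ as a subgraph. Since $K_5$ is nonplanar (by Kuratowski's theorem) and any graph admitting a nonplanar subgraph is itself nonplanar, $G_n$ is nonplanar. Hence planarity of $G_n$ forces $n \leq 7$ and $n \neq 6$.

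For sufficiency the remaining values are $n \in \{1,2,3,4,5,7\}$, a finite list, so it suffices to produce a planar embedding of $G_n$ for each (loops such as $\{1,1\}$ contribute nothing to planarity and may be ignored). For $n \leq 5$ the graph has at most five vertices and very few edges, and the already displayed drawings of $G_3$ and $G_5$ are plane, so these cases follow by inspection of $\nu_n$. The only substantive case is $n = 7$. Here I would exhibit planarity explicitly rather than rely on the mere absence of $K_5$, because Kuratowski's theorem also forbids $K_{3,3}$. The cleanest route is to observe that in $G_7$ the vertex $1$ is adjacent to every other vertex, so $G_7$ is the cone with apex $1$ over the subgraph $H$ induced by $\{2,3,4,5,6\}$, with a pendant attached at $0$. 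A cone over $H$ is planar exactly when $H$ is outerplanar; here $H$ is the ``diamond'' on $\{2,3,4,5\}$, namely $K_4$ with the edge $\{2,4\}$ deleted, together with a pendant $6$ at $5$, which is plainly outerplanar. Reattaching the pendant $0$ and reinstating the loops preserves planarity, so $G_7$ is planar. Alternatively one may verify Kuratowski's criterion directly: in $G_7$ only $1,3,5$ have degree at least $4$ and only $1,2,3,4,5$ have degree at least $3$, so $G_7$ can contain neither a $K_5$-subdivision, which needs five branch vertices of degree $\geq 4$, nor a $K_{3,3}$-subdivision, which needs six branch vertices of degree $\geq 3$.

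The main obstacle is precisely this $n=7$ case: the containment result rules out $K_5$ but says nothing about $K_{3,3}$, so the sufficiency direction cannot be closed by invoking Theorem~\ref{ThmContainK5} alone. One must supply either the explicit apex-over-outerplanar embedding or the degree-based exclusion of both Kuratowski subgraphs; the remaining cases $n \leq 5$ are routine.
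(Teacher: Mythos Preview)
Your argument is correct and follows the same two-part structure as the paper: nonplanarity for $n\geq 6$, $n\neq 7$ via Theorem~\ref{ThmContainK5} and the nonplanarity of $K_5$, and planarity for the remaining finitely many values by inspection of $\nu_n$. You actually supply more than the paper does for the case $n=7$ --- the paper simply asserts planarity ``by considering the set $\nu_n$'', whereas your cone-over-outerplanar description and the degree count excluding both $K_5$- and $K_{3,3}$-subdivisions are each valid ways to make that inspection rigorous.
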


Next, we investigate some coloring results of $G_n$. The chromatic number of a graph $G$ is the minimum numbers of colors which are assigned to all vertices of $G$ in such a way that all adjacent vertices cannot be painted by the same color. To avoid the conflict with the definition of the chromatic number of a graph, all loops in $G_n$ are not considered in this situation.

\begin{corollary}
Let $G_n = (\mathbb{Z}_n, \nu_n)$ be a graph. Then;
\begin{enumerate}
    \item $\chi(G_2)= \chi(G_3) = 2, \chi(G_4)= \chi(G_5) = 3, \chi(G_6) = 5, \chi(G_7) = 4$.
    \item $\chi(G_{pq}) \geq m+k+2$ where the conditions of $p, q, m$ and $k$ are imitated from Theorem \ref{ThmContainMK2}.
    \item $\chi(G_{p^k}) \geq m+k$ where the conditions of $p, m$ and $k$ are imitated from Theorem \ref{ThmContainMK}.
    \item $\chi(G_n) \geq m+1$ where $m$ is the number of all prime numbers less than $n$.
\end{enumerate}
\end{corollary}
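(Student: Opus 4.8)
The unifying tool for every part is the elementary bound $\chi(G) \geq \omega(G)$, where $\omega(G)$ denotes the order of a largest complete subgraph: since the vertices of a clique are pairwise adjacent, they must receive pairwise distinct colors. Granting this, items (2), (3) and (4) require essentially no new argument. Theorem \ref{ThmContainMK2} produces a complete subgraph of order $m+k+2$ in $G_{pq}$, Theorem \ref{ThmContainMK} produces one of order $m+k$ in $G_{p^k}$, and Corollary \ref{CorContainM1} produces one of order $m+1$ in a general $G_n$; substituting each into $\chi \geq \omega$ yields the three claimed inequalities at once.

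The substance of the corollary is therefore item (1), where exact values are asserted, so each $G_n$ for $2 \leq n \leq 7$ needs both a lower and a matching upper bound. For the lower bounds I would again read off the clique structure already available: $G_2$ and $G_3$ contain an edge (hence $\chi \geq 2$), $G_4$ and $G_5$ contain a triangle (hence $\chi \geq 3$), $G_6$ contains a $K_5$ by Theorem \ref{ThmContainK5} (hence $\chi \geq 5$), and $G_7$ contains a $K_4$ on the mutually coprime vertices $1,2,3,5$ --- the instance of Theorem \ref{ThmContainMK} with $p=7$, $k=1$, $m=3$ --- so that $\chi(G_7) \geq 4$. For the matching upper bounds I would exhibit explicit proper colorings: $G_2$ and $G_3$ are bipartite (indeed $G_3$ is just the path on $0,1,2$), while for $G_4$ and $G_5$ a three-coloring is obtained directly by coloring the single triangle and extending to the few remaining vertices.

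The delicate cases will be the upper bounds for $G_6$ and $G_7$, where the chromatic number must be shown to stay at the clique value rather than grow. For $G_6$ the point is that, although the graph has six vertices while its largest clique has only five, the extra vertex $0$ is adjacent only to $1,2,3$; coloring $1,2,3,4,5$ with five distinct colors then frees a color (that of $4$ or $5$) for $0$, so five colors suffice and $\chi(G_6)=5$. For $G_7$ I would write down an explicit proper $4$-coloring of all seven vertices; what makes a fourth color enough is precisely the absence of $K_5$ from Theorem \ref{ThmContainK5}, since vertex $1$ is adjacent to everything and the remaining graph on $\{0,2,3,4,5,6\}$ can then be three-colored. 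Checking that these two colorings are proper is the only genuinely case-specific computation in the whole argument.
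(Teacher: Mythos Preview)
Your proposal is correct and follows essentially the same approach as the paper: items (2)--(4) are deduced directly from Theorems~\ref{ThmContainMK2}, \ref{ThmContainMK}, and Corollary~\ref{CorContainM1} via the clique bound $\chi \geq \omega$, while item (1) is handled by direct inspection of $\nu_n$ for each small $n$. Your write-up is in fact considerably more detailed than the paper's own proof, which simply asserts that the small cases ``can be obtained by considering the set $\nu_n$'' and that the remaining items follow from the cited results.
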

\begin{proof}
    The result of the case $n\leq 7$ can be obtained by consider the set $\nu_n$. The other three items are obtained directly by Theorem \ref{ThmContainMK2}, Theorem \ref{ThmContainMK}, and Corollary \ref{CorContainM1}, respectively.
\end{proof}

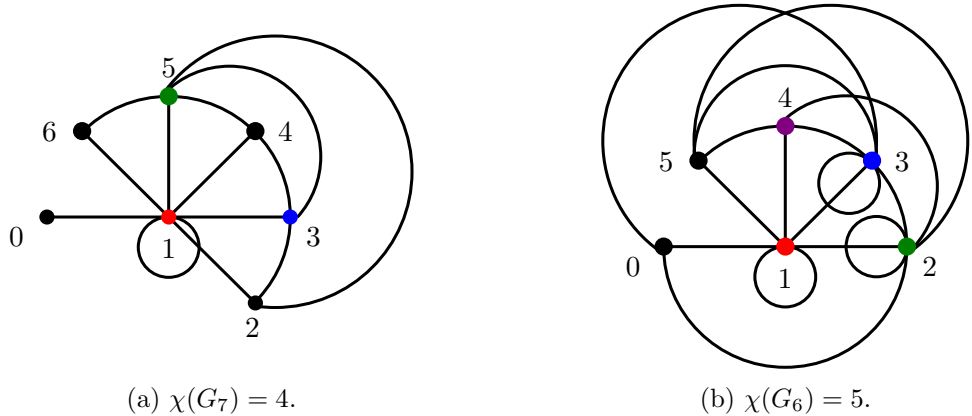
\begin{figure}[!h]
    \centering
    \begin{subfigure}[b]{0.45\textwidth}
    \centering
\psscalebox{1.0 1.0} 
{
\begin{pspicture}(0,-4.7969275)(5.3162513,-0.7627815)
\definecolor{colour1}{rgb}{0.0,0.5019608,0.0}
\psarc[linecolor=black, linewidth=0.04, dimen=outer](2.0959997,-3.1829271){1.6}{-41.987213}{-223.7636}
\psdots[linecolor=black, dotsize=0.24](0.95599973,-2.042927)
\psdots[linecolor=black, dotsize=0.24](3.2359998,-2.042927)
\rput[t](3.9999998,-3.3269272){$3$}
\rput[bl](1.9999998,-3.726927){$1$}
\rput[tl](0.0,-3.3269272){$0$}
\rput[b](3.6319997,-2.174927){$4$}
\rput[bl](1.9999998,-1.3269271){$5$}
\rput[bl](0.43199974,-2.174927){$6$}
\rput[t](3.1999998,-4.526927){$2$}
\psline[linecolor=black, linewidth=0.04](0.49599975,-3.1829271)(3.6959999,-3.1829271)
\psline[linecolor=black, linewidth=0.04](0.8959997,-1.9829271)(3.2959998,-4.382927)
\psline[linecolor=black, linewidth=0.04](2.0959997,-1.5829271)(2.0959997,-3.1829271)(3.2959998,-1.9829271)
\psarc[linecolor=black, linewidth=0.04, dimen=outer](2.0959997,-3.582927){0.4}{94.2364}{86.63354}
\psarc[linecolor=black, linewidth=0.04, dimen=outer](3.4959998,-2.582927){1.8}{-97.7986}{-216.82172}
\psarc[linecolor=black, linewidth=0.04, dimen=outer](2.8959997,-2.3829272){1.2}{-41.992718}{-224.12488}
\psdots[linecolor=black, dotsize=0.2](0.49599975,-3.1829271)
\psdots[linecolor=black, dotsize=0.2](3.2359998,-4.322927)
\psdots[linecolor=blue, dotsize=0.2](3.6959999,-3.1829271)
\psdots[linecolor=colour1, dotsize=0.24](2.0959997,-1.5829271)
\psdots[linecolor=red, dotsize=0.2](2.0959997,-3.1829271)
\end{pspicture}
}\vspace{0.4cm}
    \caption{$\chi(G_7)= 4$.}
    \end{subfigure}
    ~ 
    \begin{subfigure}[b]{0.45\textwidth}
    \centering
\psscalebox{1.0 1.0} 
{
\begin{pspicture}(0,-4.3999567)(4.84034,0.44046494)
\definecolor{colour0}{rgb}{0.5019608,0.0,0.5019608}
\definecolor{colour1}{rgb}{0.0,0.5019608,0.0}
\psarc[linecolor=black, linewidth=0.04, dimen=outer](3.2601764,-1.9397893){0.4}{47.16669}{37.07866}
\rput[bl](2.3241763,-3.3237894){$1$}
\rput[tl](0.3241764,-2.9237893){$0$}
\rput[t](4.3241763,-2.9237893){$2$}
\psline[linecolor=black, linewidth=0.04](0.82017636,-2.7797892)(4.0201764,-2.7797892)
\psline[linecolor=black, linewidth=0.04](2.4201765,-2.7797892)(2.4201765,-1.1797893)
\psline[linecolor=black, linewidth=0.04](1.2201763,-1.5797893)(2.4201765,-2.7797892)(3.6201763,-1.5797893)
\psarc[linecolor=black, linewidth=0.04, dimen=outer](2.4201765,-2.7797892){1.6}{179.40318}{134.46336}
\rput[b](3.9561763,-1.7717893){$3$}
\rput[bl](2.3241763,-0.92378926){$4$}
\rput[bl](0.7561764,-1.7717893){$5$}
\psarc[linecolor=black, linewidth=0.04, dimen=outer](3.2201765,-1.9797893){1.2}{-41.992718}{-224.12488}
\psarc[linecolor=black, linewidth=0.04, dimen=outer](3.0201764,-1.3797892){1.8}{-54.17327}{-172.55327}
\psarc[linecolor=black, linewidth=0.04, dimen=outer](3.6201763,-2.7797892){0.4}{0.0}{357.70938}
\psarc[linecolor=black, linewidth=0.04, dimen=outer](2.4201765,-3.1797893){0.4}{90.0}{82.04599}
\psarc[linecolor=black, linewidth=0.04, dimen=outer](1.8201764,-1.3797892){1.8}{-6.5632415}{-125.00865}
\psarc[linecolor=black, linewidth=0.04, dimen=outer](2.4201765,-1.5797893){1.2}{-2.5447211}{-178.99937}
\psdots[linecolor=blue, dotsize=0.24](3.5601764,-1.6397892)
\psdots[linecolor=colour0, dotsize=0.24](2.4201765,-1.1797893)
\psdots[linecolor=black, dotsize=0.24](1.2801764,-1.6397892)
\psdots[linecolor=black, dotsize=0.24](0.82017636,-2.7797892)
\psdots[linecolor=red, dotsize=0.24](2.4201765,-2.7797892)
\psdots[linecolor=colour1, dotsize=0.24](4.0201764,-2.7797892)
\end{pspicture}
}
    \caption{$\chi(G_6)= 5$.}
    \end{subfigure}
    \caption{The chromatic number of $G_n$.}
\end{figure}

\bibliographystyle{ieeetr}
\bibliography{reference}



\end{document}